\theoremstyle{plain}
\newtheorem{theorem}{Theorem}[section]
\newtheorem{lemma}[theorem]{Lemma}
\newtheorem{question}[theorem]{Question}
\theoremstyle{definition}
\newtheorem{definition}[theorem]{Definition}
\newcommand\beq{\begin{equation}}
\newcommand\eeq{\end{equation}}
\newcommand\bce{\begin{center}}
\newcommand\ece{\end{center}}
\newcommand\bea{\begin{eqnarray}}
\newcommand\eea{\end{eqnarray}}
\newcommand\bean{\begin{eqnarray*}}
\newcommand\eean{\end{eqnarray*}}
\newcommand\bmt{\begin{multline*}}
\newcommand\emt{\end{multline*}}
\newcommand\ben{\begin{enumerate}}
\newcommand\een{\end{enumerate}}
\newcommand\bit{\begin{itemize}}
\newcommand\eit{\end{itemize}}
\newcommand\brr{\begin{array}}
\newcommand\err{\end{array}}
\newcommand\bt{\begin{tabular}}
\newcommand\et{\end{tabular}}
\newcommand\ba{\begin{array}}
\newcommand\ea{\end{array}}
\newcommand\ms{\medskip}
\newcommand\ul{\underline}
\renewcommand\S{\mathcal S}
\newcommand\D{\mathcal D}
\newcommand\C{\mathcal C}
\newcommand\M{\mathcal M}
\newcommand\IE{\S^{e}}
\newcommand\CIE{\C^{e}}
\newcommand\UC{\mathcal U}
\newcommand\UCE{\UC^{e}}
\DeclareMathOperator\fp{fp}
\DeclareMathOperator\exc{exc}
\DeclareMathOperator\dexc{dexc}
\DeclareMathOperator\cyc{cyc}
\DeclareMathOperator\inv{inv}
\renewcommand\d{\mathbf{d}}
\renewcommand\l{\mathbf{\ell}}
\renewcommand\a{\mathbf{a}}
\newcommand\bij{\Theta}
\newcommand\bijj{\varphi}
\newcommand\Z{\mathbb Z}
\newcommand\open{\begin{tikzpicture}[scale=0.3]
 \draw (0,0)--(1,1); 
 \draw[thick,blue] (.4,1)--(.4,.4)--(1,.4);
\end{tikzpicture}}
\newcommand\close{\begin{tikzpicture}[scale=0.3]
 \draw (0,0)--(1,1);
 \draw[thick,blue] (.6,0)--(.6,.6)--(0,.6);
\end{tikzpicture}}
\newcommand\ubounce{\begin{tikzpicture}[scale=0.3]
 \draw (.1,0)--(1,.9);
 \draw[thick,blue] (0,.45)--(.55,.45)--(.55,1);
\end{tikzpicture}}
\newcommand\lbounce{\begin{tikzpicture}[scale=0.3]
 \draw (0,.1)--(.9,1);
 \draw[thick,blue] (.45,0)--(.45,.55)--(1,.55);
\end{tikzpicture}}
\newcommand\fixed{\begin{tikzpicture}[scale=0.3]
 \draw (0,0)--(1,1);
 \fill(.5,.5) circle (.25);
\end{tikzpicture}}
\def\U{-- ++(1,1) circle(1.5pt)}
\def\D{-- ++(1,-1) circle(1.5pt)}
\def\L{-- ++(1,0) circle(1.5pt)}
\author{Sergi Elizalde\affiliationmark{1}\thanks{Partially supported by NSF grant DMS-1001046, Simons Foundation grant \#280575, and NSA grant H98230-14-1-0125.}}
\title{Continued fractions for permutation statistics}
\affiliation{Department of Mathematics, Dartmouth College}
\keywords{permutation, Motzkin path, continued fraction, cycle diagram, permutation statistic, Bell number}
\begin{document}
\publicationdetails{19}{2018}{2}{11}{3225}
\maketitle
\begin{abstract}
We explore a bijection between permutations and colored Motzkin paths that has been used in different forms by Foata and Zeilberger, Biane, and Corteel. By giving a visual representation of this bijection in terms of so-called cycle diagrams, we find simple translations of some statistics on permutations (and subsets of permutations) into statistics on colored Motzkin paths, which are amenable to the use of continued fractions. We obtain new enumeration formulas for subsets of permutations with respect to fixed points, excedances, double excedances, cycles, and inversions.
In particular, we prove that cyclic permutations whose excedances are increasing are counted by the Bell numbers.
\end{abstract}

\section{Introduction}

The interpretation of continued fractions as generating functions of colored (also referred to as weighted or labeled) Motzkin paths is due to \cite{flajolet_combinatorial_1980}. His celebrated paper also gives several applications of continued fractions to the enumeration of combinatorial objects, including set partitions and permutations.

In the case of permutations, the results in \cite{flajolet_combinatorial_1980} are based on a bijection of \cite{francon_permutations_1979} between permutations and increasing binary trees, which allows Flajolet to obtain continued fractions enumerating permutations with respect to the number of valleys, peaks, double rises, and double falls. 

A second bijection between permutations and colored Motzkin paths was introduced by \cite{biane_permutations_1993}, and it is essentially equivalent to a bijection of \cite{foata_denerts_1990} between permutations and so-called {\it weighted bracketings}. This bijection allows Biane to keep track of the number of inversions. Furthermore, variations of it have been used by \cite{corteel_crossings_2007} to enumerate permutations with respect to the number of weak excedances, crossings and nestings; and by \cite{clarke_new_1997} to prove equidistribution results for several other permutations statistics. 

In this paper we introduce a simple visual interpretation of this second bijection, using what we call the {\em cycle diagram} of the permutation to produce the colored Motzkin path. 
The cycle diagram combines the information contained in the {\em permutation diagram} used by \cite{corteel_crossings_2007} to deal with crossings and nestings (see also \cite{burrill_generating_2016,kasraoui_distribution_2006}), as well as the permutation array, which allows us to keep track of statistics such as inversions and pattern occurrences. The cycle diagram idea was used in~\cite{elizalde_x-class_2011} to enumerate so-called {\it almost increasing} permutations, a generalization of certain permutations studied by \cite{knuth_art_1968} in connection to sorting algorithms. Here we analyze the correspondence between permutations and colored Motzkin paths in order to easily keep track of multiple statistics counting fixed points, cycles, excedances, inversions, and to impose different conditions on the cycles and the excedances of the permutation, as well as pattern-avoidance conditions. 

In Section~\ref{sec:theta} we describe the pictorial correspondence between permutations and colored Motzkin paths and introduce some notation involving cycle diagrams and continued fractions. In Section~\ref{sec:statistics} we find continued fraction expressions for the generating functions of permutations with respect to several statistics, recovering and extending some results in the literature, as well as for occurrences of a monotone consecutive pattern of length $3$. In Section~\ref{sec:subsets} we focus on subsets of permutations satisfying different combinations of conditions such as having a certain cycle structure, avoiding the classical pattern $321$, having increasing excedances, or having unimodal cycles. In particular, Theorem~\ref{thm:bell} proves that cylic permutations with increasing excedances are counted by the Bell numbers. In Section~\ref{sec:pattern} we discuss some known results and open problems regarding the enumeration of pattern-avoiding cyclic permutations. Finally, in Section~\ref{sec:digression} 
we propose a mechanism for interpreting certain combinatorial sequences as 
counting colored Motzkin paths,
which can sometimes turn sequences of positive integers into simpler weight sequences.

\section{Permutations and colored Motzkin paths}\label{sec:theta}

In this section we present the aforementioned bijection between permutations and colored (or weighted) Motzkin paths.

A {\em Motzkin path} of length $n$ is a lattice path from $(0,0)$ to $(n,0)$ with up steps $U=(1,1)$,
down steps $D=(1,-1)$, and level steps $L=(1,0)$, that never goes below the $x$-axis. We define the {\em height} of a step to be
the $y$-coordinate of its  highest point. Let $\M$ be the set of all Motzkin paths. For a path $M\in\M$, let $|M|$ denote its length.
Let $\S_n$ be the symmetric group on $\{1,2,\dots,n\}$.

First we describe a surjective map from permutations in $\S_n$ to Motzkin paths of length $n$.
A permutation $\pi\in\S_n$ can be drawn as an $n\times n$ array with dots in squares $(i,\pi(i))$ for $1\le i\le n$.
Our convention for the coordinates will be as in the cartesian plane, so that square $(i,j)$ is in the $i$th column from the left and the $j$th row from the bottom.
Next, we capture the cycle structure of the permutation on the array by drawing, for each $i$ with $\pi(i)\neq i$, a vertical segment connecting the dot in $(i,\pi(i))$ with the center of the square $(i,i)$,
and a horizontal segment connecting the same dot with the center of the square $(\pi(i),\pi(i))$.
The cycles of $\pi$ are then visualized by simply tracing connected dots, as shown in
Fig.~\ref{fig:cycles}. We call this drawing the {\em cycle diagram} of $\pi$.

\begin{figure}[htb]
\centering
\begin{tikzpicture}[scale=0.5]
 \draw (0,0) grid (12,12);
 \draw (0,0)--(12,12);
 \newcommand{\pp}{5,7,2,4,3,8,1,6,9,12,10,11}
    \foreach \y [count=\x] in \pp {
     \draw[blue,thick] (\x-0.5,\x-0.5)--(\x-0.5,\y-0.5)--(\y-0.5,\y-0.5);
     \draw[fill] (\x-0.5,\y-0.5) circle (5pt);
 }
\end{tikzpicture}
\bigskip

\begin{tikzpicture}[scale=0.5]
 \draw[dotted] (0,-.5)--(0,2.5);
 \draw[dotted] (-.5,0)--(12.5,0);
 \draw[thick] (0,0) circle(1.5pt) \U\U\L\L\D\U\D\D\L\U\L\D;
\end{tikzpicture}
\caption{\label{fig:cycles} The cycle diagram of $\pi=5\,7\,2\,4\,3\,8\,1\,6\,9\,12\,10\,11$, and its associated  Motzkin path $\theta(\pi)$.}
\end{figure}
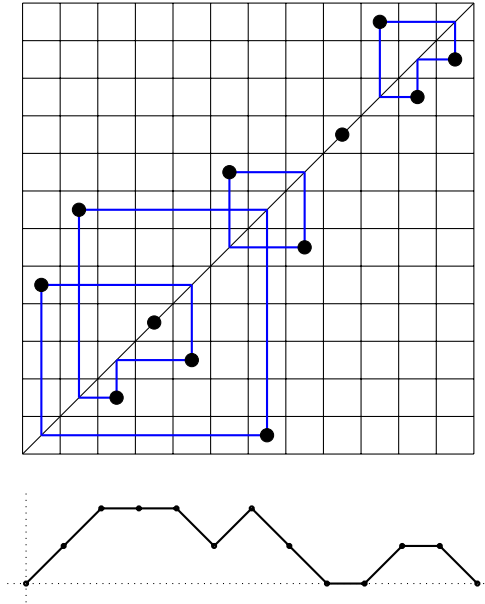

The squares with coordinates $(i,i)$ for some $i$ in
the cycle diagram of $\pi$ can be classified into five types, depending on the location of the dots and segments:
a {\em fixed point} \fixed,
an {\em opening bracket} \open,
a {\em closing bracket} \close,
an {\em upper bounce} \ubounce,
and a {\em lower bounce} \lbounce.
The sequence given by the types of the squares $(i,i)$ for $i$ from $1$ to $n$ is called the {\em diagonal sequence}
of $\pi$, and denoted by $D(\pi)$. Clearly
$D(\pi)\in\{$\fixed,\open,\close,\ubounce,\lbounce$\}^n$. To turn the diagonal sequence into
a Motzkin path of length $n$, we replace each \open\ with a $U$, each \close\ with a $D$, and each \fixed, \ubounce\ and \lbounce\ with an $L$.
Let us denote by $\theta(\pi)$ the resulting Motzkin path. Fig.~\ref{fig:cycles} gives an example of this construction. For each element of the diagonal sequence $D(\pi)$, define its {\em height} to be the height of the corresponding step in the Motzkin path $\theta(\pi)$.

The map $\theta$ is surjective but not one-to-one.
Next we construct, given a Motzkin path $M$ of length $n$, the arrays of all the permutations $\pi$ with $\theta(\pi)=M$. For each $i$ from $1$ to $n$, consider the $i$th step of $M$.

\begin{itemize}
\item If it is a $U$, place a \open\ in square $(i,i)$ of the array, and regard the vertical segment pointing upward and the horizontal segment pointing rightward in \open\ as {\em open} rays. These rays will later be closed by extending them, placing a dot on them, and connecting them with a perpendicular segment, as we will see next.

\item If it is a $D$, place a \close\ in square $(i,i)$ of the array. Letting $h$ be the height of this $D$ step, choose any of the $h$ currently open vertical rays, intersect its extension with the extension of the ray pointing leftward in the newly inserted \close, and place a dot in the intersection. We call this operation {\em closing} an open vertical ray. 
Similarly, choose one of the $h$ currently open horizontal rays, and close it by intersecting it with the ray pointing downward in the newly inserted \close, placing a dot in the intersection. Note that this construction gives $h^2$ choices for which pair of open rays to close.

\item If it is an $L$ at height $h$, chose one of the following $2h+1$ options. One choice is to place a \fixed\ in square $(i,i)$ of the array. Additional $h$ choices
come from placing a \ubounce, choosing one of the $h$ currently open vertical rays, closing it by intersecting it with the leftward pointing ray in the \ubounce\ and placing a dot in
the intersection, and regarding the upward pointing ray in the \ubounce\ as an open ray. The remaining $h$ choices come from placing a \lbounce\ in square $(i,i)$ and proceeding in a symmetric fashion.
\end{itemize}

This process that builds a permutation by placing each diagonal square from left to right while
constructing a cycle diagram with that given diagonal sequence, opening and closing rays accordingly,
will be used repeatedly throughout the paper. We will refer to it as {\em building a cycle diagram from a diagonal sequence}.
See Fig.~\ref{fig:building} for an example of an intermediate step in the construction. Note that the height of a \fixed, \ubounce, \lbounce\ or \close\ in the diagonal sequence equals the number of open horizontal (equivalently, vertical) rays at the time when it is inserted in this process.

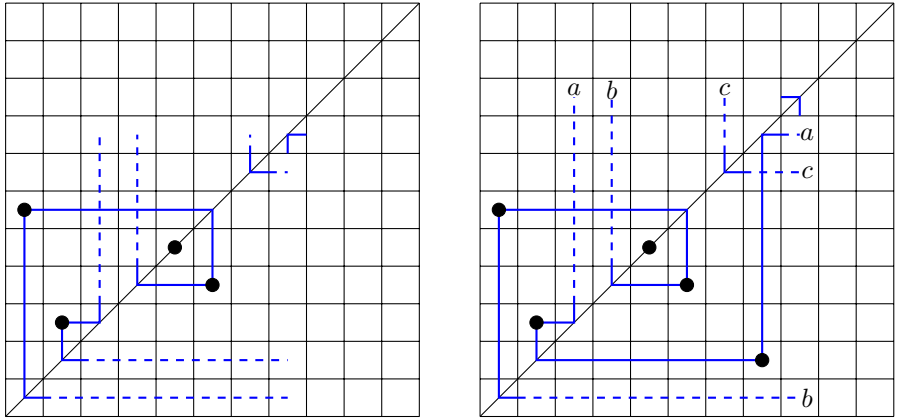
\begin{figure}[htb]
\centering
\begin{tikzpicture}[scale=0.5]
 \draw (0,0) grid (11,11);
 \draw (0,0)--(11,11);
 \draw[blue,thick] (1,0.5)--(0.5,0.5)--(1-0.5,6-0.5)--(6-0.5,6-0.5)--(6-0.5,4-0.5)--(4-0.5,4-0.5)--(4-0.5,4);
 \draw[blue,thick] (2,2-0.5)--(2-0.5,2-0.5)--(2-0.5,3-0.5)--(3-0.5,3-0.5)--(3-0.5,3); 
 \draw[blue,thick] (7,6.5)--(6.5,6.5)--(6.5,7); 
 \draw[blue,thick] (8,7.5)--(7.5,7.5)--(7.5,7); 
 \draw[blue,dashed,thick] (1,0.5)--(7.5,0.5);
 \draw[blue,dashed,thick] (3.5,4)--(3.5,7.5);
 \draw[blue,dashed,thick] (2,1.5)--(7.5,1.5);
 \draw[blue,dashed,thick] (2.5,3)--(2.5,7.5);
 \draw[blue,dashed,thick] (7,6.5)--(7.5,6.5);
 \draw[blue,dashed,thick] (6.5,7)--(6.5,7.5);
 \draw[fill] (1-0.5,6-0.5) circle (5pt);
 \draw[fill] (2-0.5,3-0.5) circle (5pt);
 \draw[fill] (5-0.5,5-0.5) circle (5pt);
 \draw[fill] (6-0.5,4-0.5) circle (5pt);
\end{tikzpicture} \qquad
\begin{tikzpicture}[scale=0.5]
 \draw (0,0) grid (11,11);
 \draw (0,0)--(11,11);
 \draw[blue,thick] (1,0.5)--(0.5,0.5)--(1-0.5,6-0.5)--(6-0.5,6-0.5)--(6-0.5,4-0.5)--(4-0.5,4-0.5)--(4-0.5,4);
 \draw[blue,thick] (8,7.5)--(7.5,7.5)--(8-0.5,2-0.5)--(2-0.5,2-0.5)--(2-0.5,3-0.5)--(3-0.5,3-0.5)--(3-0.5,3); 
 \draw[blue,thick] (7,6.5)--(6.5,6.5)--(6.5,7); 
 \draw[blue,thick] (8,8.5)--(8.5,8.5)--(8.5,8); 
 \draw[blue,dashed,thick] (1,0.5)--(8.5,0.5);
 \draw[blue,dashed,thick] (3.5,4)--(3.5,8.5);
 \draw[blue,dashed,thick] (8,7.5)--(8.5,7.5);
 \draw[blue,dashed,thick] (2.5,3)--(2.5,8.5);
 \draw[blue,dashed,thick] (7,6.5)--(8.5,6.5);
 \draw[blue,dashed,thick] (6.5,7)--(6.5,8.5);
 \draw[fill] (1-0.5,6-0.5) circle (5pt);
 \draw[fill] (2-0.5,3-0.5) circle (5pt);
 \draw[fill] (5-0.5,5-0.5) circle (5pt);
 \draw[fill] (6-0.5,4-0.5) circle (5pt);
 \draw[fill] (8-0.5,2-0.5) circle (5pt);
 \draw (2.5,8.7) node {$a$}; \draw (3.5,8.7) node {$b$}; \draw (6.5,8.7) node {$c$};
 \draw (8.7,0.5) node {$b$}; \draw (8.7,6.5) node {$c$}; \draw (8.7,7.5) node {$a$};
\end{tikzpicture} 
\caption{\label{fig:building} An intermediate step in the process of building a cycle diagram from a diagonal sequence. Open rays, drawn with dashed lines, have been labeled on the right picture to show how each vertical ray is connected to one horizontal ray.}
\end{figure}

It follows from the above construction that
if we assign weight $1$ to each $U$ in the Motzkin path~$M$, weight $h^2$ to each $D$ at height $h$, and weight $2h+1$ to each $L$ at height $h$, then the product of the weights of the steps of $M$ equals the number of permutations $\pi\in\S_n$ with $\theta(\pi)=M$. 
We denote by $w(M)$ the product of the weights of the steps of $M$, and we call it simply the {\em weight of $M$}.
The generating function for weighted Motzkin paths with weight function $w$ is $\sum_{M\in\M} w(M)z^{|M|}$.
By interpreting the weight of a step as the number of possible colors it can receive, the above construction gives a bijection between permutations and colored Motzkin paths, where $D$ steps (resp. $L$ steps) at height $h$ can receive $h^2$ (resp. $2h+1$) colors. We denote this bijection by $\bij$.

The bijection $\bij$ will be used in Section~\ref{sec:statistics} to find the distribution of some permutation statistics, and in Section~\ref{sec:subsets} to enumerate certain subsets of permutations. In both cases, the problem is reduced to counting weighted Motzkin paths, or equivalently, weighted diagonal sequences, where the weights often contain formal variables that keep track of statistics. In the diagonal sequences, we denote the weight of entries at height $h$ as follows:  \close\ has weight $d_h$, \ubounce\ has weight $\ell^a_h$, \lbounce\ has weight $\ell^b_h$, and \fixed\ has weight $\ell^c_h$.
In the associated Motzkin path, a $D$ step at height $h$ has weight $d_h$, and an $L$ step at height $h$ has weight $\ell_h=\ell^a_h+\ell^b_h+\ell^c_h$, with the convention that $\ell^a_0=\ell^b_0=0$, since diagonal sequences have no \ubounce\ or \lbounce\ at height~$0$.

\medskip

We end this section by introducing some notation regarding continued fractions.
Given two sequences $\d=(d_1,d_2,\dots)$ and $\l=(\ell_0,\ell_1,\ell_2,\dots)$, define the Jacobi type continued fraction (J-fraction)
\beq J_{\d,\l}(z)=\dfrac{1}{1-\ell_0z-\dfrac{d_1z^2}{1-\ell_1z-\dfrac{d_2z^2}{1-\ell_2z-\dfrac{d_3z^2}{\ddots}}}}.\label{eq:cf}\eeq
It was shown by \cite{flajolet_combinatorial_1980} that $J_{\d,\l}(z)$ is the generating function for weighted Motzkin paths where $D$ steps (resp. $L$ steps) at height $h$ have weight $d_h$ (resp. $\ell_h$), for each $h\ge0$. Throughout the paper, $U$ steps of Motzkin paths will be assumed to have weight $1$.

We also define the continued fraction
\beq K_{\d,\l}(z)=\ell_0z+\dfrac{d_1z^2}{1-\ell_1z-\dfrac{d_2z^2}{1-\ell_2z-\dfrac{d_3z^2}{\ddots}}}, \label{eq:elevated_cf}\eeq
which is the generating function for {\em elevated} weighted Motzkin paths, meaning that they do not touch the $x$-axis except at the beginning and at the end (where we allow the length-1 path $H$), with weights as above.

\section{Statistics on permutations}
\label{sec:statistics}

An {\em excedance} (resp. fixed point, deficiency) of $\pi\in\S_n$ is a value $\pi(i)$ such that $i<\pi(i)$ (resp. $i=\pi(i)$, $i>\pi(i)$). Let $\exc(\pi)$ (resp. $\fp(\pi)$) denote the number of excedances (resp. fixed points) of $\pi$.
A {\em double excedance} of $\pi$ is a value $\pi(i)$ such that $i<\pi(i)<\pi(\pi(i))$. Let $\dexc(\pi)$ denote the number of double excedances of $\pi$. Let $\cyc(\pi)$ be the number of cycles of $\pi$, and let $\inv(\pi)$ be its number of inversions.

Let $\S=\bigcup_{n\ge0}\S_n$. For a permutation $\pi\in\S_n$, we write $|\pi|=n$ to denote its length. Let $F_{\S}(z)=\sum_{\pi\in\S} z^{|\pi|}=\sum_{n\ge 0} n! z^n$ be the ordinary generating function (OGF) for all permutations.
Recall from Section~\ref{sec:theta} the bijection $\bij$ between $\S_n$ and colored Motzkin paths of length $n$, where $D$ steps (resp. $L$ steps) at height $h$ can receive $h^2$ (resp. $2h+1$) colors.
This bijection yields the well-known continued fraction expansion $F_{\S}(z)=J_{\d,\l}(z)$ with $d_h=h^2$ and $\ell_h=2h+1$. This expansion appears, for example, in \cite[Thm.\ 3B]{flajolet_combinatorial_1980}.

\subsection{Fixed points, excedances, double excedances, cycles, and inversions}\label{sec:fp}

The above enumeration of permutations can be refined by keeping track of statistics that behave well under the bijection $\bij$. Next we consider a few.
\begin{enumerate}[(i)]
\item {\em Fixed points} in the permutation are simply squares of type \fixed, and as such they correspond in the Motzkin path to one of the $2h+1$ color choices for $L$ steps at height $h$, for every $h$. 
\item One can keep track of {\em excedances} by
observing that $i<\pi(i)$ if and only if the square $(\pi(i),\pi(i))$ in the array is of type \close\ or \ubounce.
Thus, excedances correspond in the Motzkin path to $D$ steps (of any of the $h^2$ possible colors), plus $h$ of the $2h+1$ color choices for $L$ steps at height $h$.
\item A {\em double excedance} $i<\pi(i)<\pi(\pi(i))$ in the permutation corresponds to a square $(\pi(i),\pi(i))$ of type~\ubounce, and thus to 
$h$ of the $2h+1$ color choices for $L$ steps at height $h$.
\item In the process of building a cycle diagram from a diagonal sequence described in Section~\ref{sec:theta}, a {\em cycle} is completed every time that a diagonal square of type \close\ closes two rays that belong to the same cycle in the partial diagram constructed so far (see Fig.~\ref{fig:building}). Since each of the open vertical rays is connected to exactly one of the open horizontal rays, it follows that $h$ vertical and $h$ horizontal open rays give rise to $h$ connected pairs before a \close\ at height $h$ is inserted. Thus, out of the $h^2$ color choices for $D$ steps at height $h$, there are exactly $h$ of them that complete a new cycle. Additionally, every fixed point \fixed\ produces a cycle as well.
\item In the process of building a cycle diagram from a diagonal sequence, one can keep track of {\em inversions} by counting how many are forced by each new dot that is placed in the cycle diagram. These can be inversions with another existing dot, or with an open ray that will create an inversion once it is closed. Suppose that at some step of the process there are $h$ open vertical rays and $h$ open horizontal rays when a diagonal square is placed.

If the inserted square is of type \fixed, the dot in this square forces inversions with the dots that will eventually be placed in each of the current open vertical rays (since they will be above and to the left of the recent dot) and open horizontal rays (since they will be below and to the right of the recent dot), contributing $2h$ inversions. 

If the inserted square is of type \ubounce, it closes one of the open vertical rays. Assuming it closes the $j$th open vertical ray from the left, then the dot placed on this ray forces an inversion with each of the $j-1$ open rays to its left, as well as with each of the $h$ open horizontal rays, contributing $j-1+h$ inversions.  A symmetric argument applies if the inserted square is of type \lbounce. Thus, in a generating function where inversions are weighted by $q$, the contribution of a level step at height $h$ in the Motzkin path would be $q^{2h}+2\sum_{j=1}^h q^{j-1+h}=q^{2h}+2q^h[h]_q$, where we use the notation $[h]_q:=1+q+\dots+q^{h-1}$.

If the inserted square is of type \close, suppose it closes the $j$th open vertical ray from the left and the $k$th open horizontal ray from the bottom. The dot placed on the $j$th open vertical ray forces an inversion with each of the $j-1$ open rays to its left, as well as with each of the $h$ open horizontal rays, contributing $j-1+h$ inversions. Similarly, the dot placed on the $k$th open horizontal ray contributes $k-1+h$ inversions, except that the inversion between the two newly placed dots is counted twice. Thus, we have a total of $2h-1+(j-1)+(k-1)$ new inversions. It follows that, weighing each inversion by $q$, the contribution to the generating function of a $D$ step at height $h$ in the Motzkin path is $q^{2h-1}
\left(\sum_{j=1}^h q^{j-1}\right)\left(\sum_{k=1}^h q^{k-1}\right)=q^{2h-1}[h]_q^2$.
\end{enumerate}

Throughout the paper, we will use the variables $x,v,w,t,q$ to mark the statistics number fixed points, number of excedances, number of double excedances, number of cycles and number of inversions, respectively.

If we ignore inversions, it follows from items (i)--(iv) above that the continued fraction of all permutations with respect to the number of fixed points, the number of excedances, the number of double excedances, and the number of cycles is
$$\sum_{\pi\in\S} x^{\fp(\pi)}v^{\exc(\pi)}w^{\dexc(\pi)}t^{\cyc(\pi)}z^{|\pi|}=J_{\d,\l}(z)$$
with $d_h=v(h^2-h+ht)=vh(h-1+t)$ and $\ell_h=xt+h(1+vw)$ for all $h$, that is,
\begin{equation}\label{eq:fp_exc_dexc_cyc}
J_{\d,\l}(z)=\dfrac{1}{1-xtz-\dfrac{vtz^2}{1-(xt+(1+vw))z-\dfrac{2v(1+t)z^2}{1-(xt+2(1+vw))z-\dfrac{3v(2+t)z^2}{\ddots}}}}.
\end{equation}
An equivalent expression was found by \cite[Thm.~3]{zeng_enumerations_1993} (see also~\cite{viennot_theorie_1983}), who derived it by first giving a closed form for the corresponding exponential generating function, and then using an addition formula of Stieltjes and Rogers to deduce the continued fraction.
The special case of \eqref{eq:fp_exc_dexc_cyc} where $v=w=1$ had been obtained in~\cite[Thm.\ 3C]{flajolet_combinatorial_1980} using again a less direct approach, consisting of first applying the bijection by \cite{francon_permutations_1979} from permutations viewed as increasing binary trees to weighted Motzkin paths, and then Foata's fundamental transformation from \cite{foata_theorie_1970}.

While it is difficult to simultaneously keep track of cycles and inversions, we can obtain from items (i)--(iii) and (v) the continued fraction of all permutations with respect to the number of fixed points, the number of excedances, the number of double excedances, and the number of inversions as
$$\sum_{\pi\in\S} x^{\fp(\pi)}v^{\exc(\pi)}w^{\dexc(\pi)}q^{\inv(\pi)}z^{|\pi|}=J_{\d,\l}(z)$$
with $d_h=vq^{2h-1}[h]_q^2$ and $\ell_h=xq^{2h}+(1+vw)q^h[h]_q$ for all $h$, that is,
\begin{equation}\label{eq:fp_exc_dexc_inv}
J_{\d,\l}(z)=\dfrac{1}{1-xz-\dfrac{vqz^2}{1-(xq^2+(1+vw)q)z-\dfrac{vq^3(1+q)^2z^2}{1-(xq^4+(1+vw)q^2(1+q))z-\dfrac{vq^5(1+q+q^2)^2z^2}{\ddots}}}}.
\end{equation}

The special case of \eqref{eq:fp_exc_dexc_inv} where $x=v=w=1$ appears in~\cite[Eq.~(1.1)]{biane_permutations_1993}. 
Generating functions similar to~\eqref{eq:fp_exc_dexc_cyc} and~\eqref{eq:fp_exc_dexc_inv} for the subset of almost-increasing permutations with respect to the above statistics (except for double excedances) appear in~\cite[Thms.\ 6.1, 6.2]{elizalde_x-class_2011}.

Permutations with no double excedances and no double deficiencies (that is, no $i$ with $i>\pi(i)>\pi(\pi(i))$) correspond to cycle diagrams with no \ubounce\ and no \lbounce. Their generating function with respect to $\fp$, $\exc$ and $\cyc$ is
$J_{\d,\l}(z)$ with $d_h=vh(h-1+t)$ and $\ell_h=xt$ for all $h$. If, additionally, we do not allow fixed points (which corresponds to setting $x=0$ in the generating function), then these
are called {\em CUD permutations with all cycles of even length} in~\cite[Prop.\ 2.2]{deutsch_cycle-up-down_2011}, where they are also shown to be counted by the secant numbers.

Continued fractions similar to~\eqref{eq:fp_exc_dexc_cyc} and~\eqref{eq:fp_exc_dexc_inv} have been used by \cite{shin_symmetric_2012,shin_symmetric_2016} to prove $\gamma$-positivity of certain polynomials enumerating permutations with respect to several statistics.

\subsection{Double excedances and consecutive patterns}\label{sec:consec}

Next we show that double excedances in permutations are very closely related to occurrences of the consecutive pattern $\ul{123}$. We underline consecutive patterns to distinguish them from classical patterns, which will appear in Section~\ref{sec:321}.
For $\sigma\in\S_m$, an {\em occurrence} of $\ul{\sigma}$ in $\pi$ {\em as a consecutive pattern} is a subsequence $\pi(i)\pi(i+1)\dots\pi(i+m-1)$ whose elements are in the same relative order as $\sigma(1)\sigma(2)\dots\sigma(m)$. For example, an occurrence of $\ul{123}$ as a consecutive pattern is a subsequence $\pi(i)\pi(i+1)\pi(i+2)$ with $\pi(i)<\pi(i+1)<\pi(i+2)$.

Given $\pi\in\S_n$, consider its cycle decomposition where each cycle is written with its smallest element first, and the cycles are sorted from left to right by decreasing smallest element. Let $\hat\pi$ be the permutation whose one-line notation is obtained by removing the parentheses from this cycle decomposition. The map $\pi\mapsto\hat\pi$ is a bijection from $\S_n$ to $\S_n$, and it is a version of Foata's fundamental transformation from \cite{foata_theorie_1970}.

We claim that the number of double excedances of $\pi$ equals the number of occurrences of $\ul{123}$ in $\hat\pi$ as a consecutive pattern. Indeed,
since cycles are written with their smallest element first, double excedances of $\pi$ correspond to triples of adjacent increasing entries within a cycle. Since there are no increasing subsequences of $\hat\pi$ that straddle two cycles in this cycle decomposition of $\pi$, it follows that double excedances of $\pi$ become precisely occurrences of $\ul{123}$ in $\hat\pi$.

The exponential generating function (EGF) for permutations with respect to the number of occurrences of $\ul{123}$ was found by \cite{elizalde_consecutive_2003}. They showed that, if we let $c_{\ul{123}}(\pi)$ denote the number of occurrences of the consecutive pattern $\ul{123}$ in $\pi$, then
$$\sum_{\pi\in\S} w^{c_{\ul{123}}(\pi)} \frac{z^{|\pi|}}{n!}=\frac{2re^{\frac{1}{2}(1-w+r)z}}{1+w+r-e^{rz}(1+w-r)},$$
where $r=\sqrt{(w-1)(w+3)}$. Using the bijection $\pi\mapsto\hat\pi$ together with Equation~\eqref{eq:fp_exc_dexc_cyc} with $x=v=t=1$, we obtain a continued fraction expression for the corresponding ordinary generating function:
\begin{equation}\label{eq:123}
\sum_{\pi\in\S} w^{c_{\ul{123}}(\pi)} z^{|\pi|}=\dfrac{1}{1-z-\dfrac{z^2}{1-(2+w)z-\dfrac{2^2z^2}{1-(3+2w)z-\dfrac{3^2z^2}{1-(4+3w)z-\dfrac{4^2z^2}{\ddots}}}}}.
\end{equation}
Keeping the variables $v$ and $t$ in Equation~\eqref{eq:fp_exc_dexc_cyc}, we obtain a refinement of Equation~\eqref{eq:123} with respect to the number of ascents (i.e., occurrences of $\ul{12}$) and the number of left-to-right minima, respectively. This is because the bijection $\pi\to\hat\pi$ sends excedances to ascents, and cycles to left-to-right minima.

As a particular case, permutations that avoid the consecutive pattern $\ul{123}$ are in bijection with permutations without double excedances, which are those whose cycle diagram has no diagonal squares of type \ubounce. These correspond, via $\bij$, to Motzkin paths where $L$ steps at height $h$ can have $h+1$ colors and $D$ steps at height $h$ can have $h^2$ colors, or equivalently, to Motzkin paths where all steps ($U$, $L$ and $D$) whose lowest point has ordinate $y$ can have $y+1$ colors. Equation~\eqref{eq:123} for $w=0$ had been conjectured by Paul D. Hanna (personal communication, 2016) based on empirical evidence (see also \cite[A049774]{OEIS}). 

A continued fraction related to Equation~\eqref{eq:123} was given by \cite{flajolet_combinatorial_1980}. Defining a {\em double rise} of $\pi\in\S_n$ to be an occurrence of the consecutive pattern $\ul{123}$ in the sequence $0\pi_1\pi_2\dots\pi_n0$, and shifting the exponent of $z$ up by one, a continued fraction for permutations with respect to the number of double rises appears in~\cite[Thm.\ 3A]{flajolet_combinatorial_1980}.

\section{Subsets of $\S_n$}
\label{sec:subsets}

In this section we enumerate subsets of permutations by restricting the choices of open rays that can be closed by squares of type \close, \ubounce\ and \lbounce\ in the cycle diagram. This is equivalent to restricting the color choices of the $D$ and $L$ steps of the Motzkin path.

For each subset of permutations, we express the generating function with respect to several statistics as a continued fraction. Tab.~\ref{tab:summary} summarizes the continued fractions obtained in this section. For simplicity, the table only shows the univariate generating function, and not the refinements with respect to statistics.

\begin{table}
{\footnotesize \begin{tabular}{|c||c|c|c|c|c|c|}
  \hline
  {\bf \bt{c} Subset of \\ permutations\et } & \bt{c} $d_h$\\ \close \et & \bt{c} $\ell^a_h$ \\ \ubounce \et & \bt{c} $\ell^b_h$ \\
  \lbounce \et & \bt{c} $\ell^c_h$\\ \fixed \et & \bt{r} $\ell_h=\ell^a_h+$ \\ $\ell^b_h+\ell^c_h$ \et  & Counting formula \\ \hline \hline

  {\bf all}  & $h^2$ & $h$ & $h$ & $1$ & $2h+1$ & $n!$ \\ \hline

  {\bf cyclic$^*$}  & \bt{c} $h(h-1)$ \\ (n) \\ $(d_1=1)$ \et &  $h$ & $h$ & $0$ & $2h$ & $(n-1)!$ \\ \hline

  {\bf \bt{c} 321-avoiding \\ ($=$ nonnesting) \et}  & $1$ (o) & $1$ (o) & $1$ (o) & \multirow{2}{*}{$\ba{c} 0 \\ (\ell^c_0=1) \ea$} & \multirow{3}{*}{$\ba{c} 2\\ (\ell_0=1) \ea$} & \multirow{3}{*}{$C_n$} \\ \cline{1-4}
  
  {\bf \bt{c} with unimodal \\ noncrossing cycles \\ and no nested \\ fixed points \et} &  $1$ (i) & $1$ (i) & $1$ (i) & & & \\ \cline{1-5}
  
  {\bf noncrossing}  & $1$ (i) & $0$ & $1$ (i) & $1$ & & \\ \hline

  {\bf \bt{c} with increasing\\ excedances \et}  &  \bt{c} $h$\\ (o/any)\et & $1$ (o) & $h$ & $1$ & $\ba{c} h+2 \\ (\ell_0=1) \ea$ & \cite[A074664]{OEIS} \\ \hline

  {\bf \bt{c} with increasing\\ weak excedances \et}  &  \bt{c} $h$\\ (o/any)\et & $1$ (o) & $h$ & $\ba{c} 0 \\ (\ell^c_0=1) \ea$ & $h+1$ & $B_n$ \\ \hline

  {\bf \bt{c} cyclic$^*$ with \\ increasing\\ excedances \et}  & \bt{c} $h-1$ \\ (o/n) \\ $(d_1=1)$ \et & $1$ (o) & $h$ & $0$ & $h+1$ & $B_{n-1}$\\ \hline

  {\bf \bt{c} with unimodal\\ cycles\et}  & $h$ (m) & $h$ & $h$ & $1$ & $2h+1$ & \bt{l} EGF is \\ $\exp\left(\dfrac{e^{2z}{+}2z{-}1}{4}\right)$ 
  \et\\ \hline
  
  {\bf  \bt{c} with unimodal \\ cycles and \\ increasing\\ excedances \et} & \bt{c} $1$ (o/m) \et & $1$ (o) & $h$  & $1$ & $\ba{c} h+2 \\ (\ell_0=1) \ea$ & \bt{c} (closed form \\ unknown) \et \\ \hline

  {\bf \bt{c} with increasing \\  excedances \\ and increasing \\ deficiencies \et}  & $1$ (o) & $1$ (o) & $1$ (o) & \multirow{2}{*}{$1$} & \multirow{2}{*}{$\ba{c} 3 \\ (\ell_0=1) \ea$} & \multirow{2}{*}{\bt{l} OGF is \\ $\dfrac{2}{1{+}x{+}\sqrt{1{-}6x{+}5x^2}}$ \et} \\ \cline{1-4}
  {\bf \bt{c} with \\ unimodal \\ noncrossing \\ cycles \et} & $1$ (i) & $1$ (i) & $1$ (i) &&& \\
  \hline

 {\bf \bt{c} with no double \\ excedances \\ or double \\ deficiencies \et} & $h^2$ & $0$ & $0$  & $1$ & $1$ & EGF is $\dfrac{e^z}{\cos z}$\\ \hline

 {\bf involutions} & \bt{c} $h$ (m) \et & $0$ & $0$  & $1$ & $1$ &  EGF is $e^{z+z^2/2}$\\ \hline
 
  {\bf \bt{c} $321$-avoiding \\ involutions\et } & \bt{c} $1$ (m) \et & $0$ & $0$  & $\ba{c} 0 \\ (\ell^c_0=1) \ea$ & $\ba{c} 0 \\ (\ell_0=1) \ea$ &  $\displaystyle\binom{n}{\lfloor n/2 \rfloor}$\\ \hline
\end{tabular}
}
\caption{Summary of cycle diagram restrictions and resulting continued fractions for the subsets of permutations considered in Section~\ref{sec:subsets}. The restrictions on rays to be closed
are indicated by the following abbreviations: innermost (i); outermost (o); matching (m), i.e., requiring the vertical and horizontal rays to be connected; nonmatching (n).
For subsets of cyclic permutations (marked with a~*), the resulting Motzkin paths are elevated and the continued fraction is given by Equation~\eqref{eq:elevated_cf}. For all the other subsets, the continued fraction is given by Equation~\eqref{eq:cf}.}
\label{tab:summary}
\end{table}

\subsection{Cyclic permutations}\label{sec:cyclic}

The same idea that we used in Section~\ref{sec:fp}(iv) to keep track of cycles in permutations allows us to restrict our enumeration to permutations that consist of one cycle, which we call cyclic permutations.
Denote by $\C_n$ the set cyclic permutations in $\S_n$, and let $\C=\bigcup_{n\ge0}\C_n$. Recall that $|\C_n|=(n-1)!$.

In the expansion of~\eqref{eq:fp_exc_dexc_cyc}, cyclic permutations correspond to the terms where the exponent of $t$ is~$1$. Thus,
the generating function for cyclic permutations with respect to the statistics $\fp$, $\exc$ and $\dexc$ can be obtained from~\eqref{eq:fp_exc_dexc_cyc} by subtracting $1$, dividing by $t$, and then setting $t=0$. However, we will see that it is also possible to directly obtain a continued fraction of the form~\eqref{eq:elevated_cf}.

Consider again the process that builds a cycle diagram from a diagonal sequence. In order to build a cyclic permutation, we have to restrict the possible vertical and horizontal rays that can be closed when a \close\ is placed,  so that the rays that we close are not connected to each other before this diagonal square is placed. 
As discussed in item~(iv) from Section~\ref{sec:fp} and in Fig.~\ref{fig:building}, each open vertical ray is connected to exactly one open horizontal ray. Thus, for each square of type \close\ (equivalently, each $D$ step in the Motzkin path) at height $h$, there are $h(h-1)$ possible pairs of non-connected rays to close, with the exception of the rightmost \close\ (equivalently, the last $D$ step of the path), for which there is one choice, namely to close the unique open horizontal and vertical rays, creating the only cycle in the permutation. Additionally, for it to be cyclic, a permutation cannot have any fixed points \fixed\ unless it has length~$1$. Using items (ii) and (iii) as well, it follows that
$\sum_{\pi\in\C} x^{\fp(\pi)}v^{\exc(\pi)}w^{\dexc(\pi)}z^{|\pi|}=K_{\d,\l}(z)$ with $d_1=v$, $\ell_0=x$, $d_h=h(h-1)v$ for $h\ge2$, and $\ell_h=h(1+vw)$ for $h\ge1$, that is, the desired continued fraction is
$$K_{\d,\l}(z)=xz+\dfrac{vz^2}{1-(1+vw)z-\dfrac{2vz^2}{1-2(1+vw)z-\dfrac{6vz^2}{1-3(1+vw)z-\dfrac{12vz^2}{\ddots}}}}.$$

\subsection{$321$-avoiding permutations}\label{sec:321}

In this section we use the classical definition of permutation patterns, not to be confused with consecutive patterns as defined in Section~\ref{sec:consec}. For $\sigma\in\S_m$, a permutation $\pi\in\S_n$ is said to avoid $\sigma$ if there is no subsequence $\pi(i_1)\pi(i_2)\dots\pi(i_m)$ with $i_1<\dots<i_m$ whose elements are in the same relative order as $\sigma(1)\sigma(2)\dots\sigma(m)$. For example, an occurrence of $321$ is a decreasing subsequence of length~$3$.

Let $\S_n(321)$ denote the set of $321$-avoiding permutations in $\S_n$, and let $\S(321)=\bigcup_{n\ge0}\S_n(321)$. It is well known (see \cite{knuth_art_1968}) that $|\S_n(321)|=C_n$, the $n$th Catalan number.
The following fact about $321$-avoiding permutations is often used in the literature. A non-excedance refers to a value $\pi(i)$ such that $i\ge\pi(i)$, that is, a fixed point or deficiency.

\begin{lemma}\label{lem:321}
A permutation is $321$-avoiding if and only if both its excedances and its non-excedances form increasing subsequences.
\end{lemma}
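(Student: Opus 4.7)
\emph{Proof plan.} I will prove the two implications separately. The direction ($\Leftarrow$) will be a short pigeonhole argument, while the direction ($\Rightarrow$) will proceed by contrapositive together with a counting argument on the set of values available to a prescribed range of positions.

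For ($\Leftarrow$), assume that $\pi$ restricted to its excedance positions is increasing, and likewise for its non-excedance positions. Suppose for contradiction that $\pi$ has a $321$ pattern at positions $a<b<c$ with $\pi(a)>\pi(b)>\pi(c)$. Each of $a,b,c$ is either an excedance position or a non-excedance position, so by pigeonhole two of them, say $i<j$, lie in the same class. Then $\pi(i)>\pi(j)$ witnesses a descent inside a subsequence that was assumed increasing, a contradiction.

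For ($\Rightarrow$), suppose $\pi$ is $321$-avoiding and, aiming for a contradiction, that two excedance positions $i<j$ satisfy $\pi(i)>\pi(j)$; in particular $j<\pi(j)<\pi(i)$. A simple count rules out the possibility that every position $k>j$ has $\pi(k)\geq\pi(j)$: that would force the $n-j$ values used at positions $>j$ to fit inside $\{\pi(j)+1,\dots,n\}$, a set of size $n-\pi(j)<n-j$. Hence some $k>j$ satisfies $\pi(k)<\pi(j)$, and the triple $i<j<k$ realizes $\pi(i)>\pi(j)>\pi(k)$, contradicting the hypothesis. So excedances form an increasing subsequence.

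For non-excedances I intend to run the dual count. Suppose $i<j$ are non-excedances with $\pi(i)>\pi(j)$, and look for $k<i$ with $\pi(k)>\pi(i)$. If no such $k$ existed, the $i-1$ values at positions $<i$ would all lie in $\{1,\dots,\pi(i)-1\}$, forcing $i-1\leq\pi(i)-1$ and hence, combined with the non-excedance inequality $\pi(i)\leq i$, the equality $\pi(i)=i$; but then positions $<i$ already occupy every value in $\{1,\dots,i-1\}$, so the value $\pi(j)<i$ cannot reappear at the later position $j>i$. This borderline fixed-point case is the only real subtlety. Alternatively, one can sidestep the dual argument by invoking the reverse-complement map $\pi'(i)=n+1-\pi(n+1-i)$, which preserves $321$-avoidance while swapping excedances with non-excedances, thereby reducing the non-excedance claim to the excedance claim already established.
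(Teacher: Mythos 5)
Your proof is correct and follows essentially the same route as the paper: a pigeonhole argument for the ``if'' direction, and a counting argument on the available values for the ``only if'' direction (the paper writes out only the decreasing non-excedance case and folds the later, smaller entry into the count so that the fixed-point borderline never arises, whereas you treat the excedance and non-excedance cases separately and handle that borderline explicitly). One small caveat: your optional reverse-complement shortcut does not literally swap excedances with non-excedances --- it exchanges strict excedances with strict deficiencies and preserves fixed points, so non-excedances of $\pi$ become \emph{weak} excedances of $\pi'$ --- but since your direct argument for the non-excedance case is complete, nothing is lost.
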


\begin{proof}
Clearly, a merge of two increasing subsequences cannot contain an occurrence of $321$, proving the `if' direction. To prove the converse, suppose that $\pi$ contains a pair of decreasing non-excedances, say $\pi(k)<\pi(j)\le j<k$. We will show that there is some $i<j$ such that $\pi(i)>\pi(j)$, and thus $\pi(i)\pi(j)\pi(k)$ is an occurrence of $321$ in $\pi$.  Indeed, if the entries $\pi(1),\pi(2),\dots,\pi(j-1)$ were all less than $\pi(j)$, then, using that $\pi(k)<\pi(j)$, there would be $j$ entries taking no more than $\pi(j)-1\le j-1$ different  values, which is a contradiction.
\end{proof}

Again, consider the process that builds the cycle diagram of a permutation from a diagonal sequence. 
Requiring excedances to form an increasing subsequence is equivalent to requiring that every vertical ray that we close is the leftmost open ray at that time. This prevents us from creating a pair of decreasing excedances by first closing a vertical ray and later closing another vertical ray to the left of it. Similarly, having the sequence of deficiencies to be increasing is equivalent to requiring every horizontal ray that we close to be the bottommost open ray at that time. We use the term {\em outermost} ray to refer to the leftmost open vertical ray or the bottommost open horizontal ray.
Additionally, in order for the sequence of non-excedances to be increasing, fixed points \fixed\ can only occur when there are no open rays.

In summary, a permutation has increasing excedances and non-excedances (equivalently, avoids $321$ by Lemma~\ref{lem:321})
if and only if its cycle diagram is obtained from a diagonal sequence with no fixed points at height $\ge1$
by always closing the outermost open rays every time that a \close, \ubounce\ or \lbounce\ is encountered.

By the above argument, $\bij$ restricts to a bijection between $321$-avoiding permutations and colored Motzkin paths where $D$ steps can only receive one color (since for each \close, the rays to close are forced), $L$ steps at height $h\ge1$ can receive two colors (corresponding to inserting a \ubounce\ or a \lbounce), and $L$ steps at height $0$ can receive one color (corresponding to inserting a \fixed).
These bicolored Motzkin paths are well-known to be counted by the Catalan numbers (see \cite{delest_algebraic_1984}). Keeping track of fixed points, excedances, double excedances and inversions and using items (i)--(iii) and (v) from Section~\ref{sec:fp}, their continued fraction expansion is
$\sum_{\pi\in\S(321)} x^{\fp(\pi)}v^{\exc(\pi)}w^{\dexc(\pi)}q^{\inv(\pi)}z^{|\pi|}=J_{\d,\l}(z)$ where $\ell_0=x$, $d_h=vq^{2h-1}$ and $\ell_h=(1+vw)q^h$ for $h\ge1$, that is,
$$J_{\d,\l}(z)=\dfrac{1}{1-xz-\dfrac{vqz^2}{1-(1+vw)qz-\dfrac{vq^3z^2}{1-(1+vw)q^2z-\dfrac{vq^5z^2}{\ddots}}}}.$$

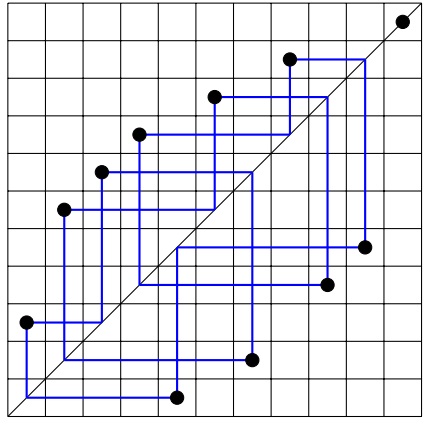
\begin{figure}[htb]
\centering
\begin{tikzpicture}[scale=0.5]
 \draw (0,0) grid (11,11);
 \draw (0,0)--(11,11);
 \newcommand{\pp}{3,6,7,8,1,9,2,10,4,5,11}
    \foreach \y [count=\x] in \pp {
     \draw[blue,thick] (\x-0.5,\x-0.5)--(\x-0.5,\y-0.5)--(\y-0.5,\y-0.5);
     \draw[fill] (\x-0.5,\y-0.5) circle (5pt);
 }
\end{tikzpicture}
\caption{\label{fig:321} The $321$-avoiding permutation $\pi=3\,6\,7\,8\,1\,9\,2\,10\,4\,5\,11$.}
\end{figure}

A slight variation of this continued fraction without the statistic $\dexc$ appears in~\cite[Thm.\ 7.3]{cheng_inversion_2013}, where it is obtained using a bijection similar to the above restriction of $\bij$, although without the visual description. A remarkable property of this bijection is that the statistic $\inv$ on a $321$-avoiding permutation becomes the area under the corresponding Motzkin path (and above the $x$-axis). This property does not hold for the general definition of $\Theta$ on arbitrary permutations.

A closed form for the generating function of $321$-avoiding permutations with respect to the statistics $\fp$ and $\exc$ appears in~\cite{elizalde_fixed_2012}:
$$\sum_{\pi\in\S(321)} x^{\fp(\pi)}v^{\exc(\pi)}z^{|\pi|}=\frac{2}{1+(1-2x+v)z+\sqrt{1-2(1+v)z+(1-v)^2z^2}}.$$

\subsection{Other Catalan classes}\label{sec:catalan}
The construction of cyclic diagrams of $321$-avoiding permutations given in Section~\ref{sec:321} can be modified to obtain other subsets of permutations counted also by the Catalan numbers. For example, the restriction of allowing only the outermost open (vertical and horizontal) ray to be closed at any time can be replaced by allowing  only the innermost open (vertical and horizontal) ray to be closed, without changing the condition that \fixed\ can only occur at height $0$. This new restriction of $\bij$ gives a bijection between bicolored Motzkin paths as before, and permutations with {\em unimodal noncrossing cycles} and {\em no nested fixed points}, defined as follows. 
\begin{definition}\label{def:unimodal}
Let $\pi\in\S_n$. We say that
\begin{enumerate}[(a)] 
\item $\pi$ has {\em noncrossing cycles} if the partition of $\{1,2,\dots,n\}$ induced by the cycles of $\pi$ is noncrossing, that is, there are no $i<j<k<l$ such that $i,k$ belong to one cycle and $j,l$ belong to another;
\item a cycle of $\pi$ is {\em unimodal} if, when written with its smallest element first as $(a_1,a_2,\dots,a_k)$, there exists some $1\le i\le k$ such that $a_1<\dots<a_i>\dots>a_k$;
\item a fixed point $j$ is {\em nested} there exist $i,k$ with $i<j<k$ such that $\pi(i)=k$ or $\pi(k)=i$.
\end{enumerate}
\end{definition}
Fig.~\ref{fig:unimodal_noncrossing} gives an example of a permutation with unimodal noncrossing cycles and no nested fixed points. By construction, the number of such permutations is the Catalan number $C_n$, and their refined enumeration with respect to fixed points, excedances, and double excedances coincides with that of $\S_n(321)$. Keeping track of the number of cycles and inversions as well, with the usual variables, the generating function for 
permutations with unimodal noncrossing cycles and no nested fixed points is $J_{\d,\l}(z)$ with $\ell_0=xt$, $d_h=vtq^{4h-3}$ and $\ell_h=(1+vw)q^{2h-1}$ for $h\ge1$.
 
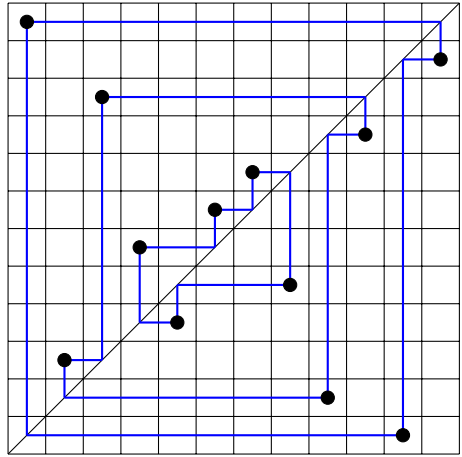
\begin{figure}[htb]
\centering
\begin{tikzpicture}[scale=0.5]
 \draw (0,0) grid (12,12);
 \draw (0,0)--(12,12);
 \newcommand{\pp}{12,3,10,6,4,7,8,5,2,9,1,11}
    \foreach \y [count=\x] in \pp {
     \draw[blue,thick] (\x-0.5,\x-0.5)--(\x-0.5,\y-0.5)--(\y-0.5,\y-0.5);
     \draw[fill] (\x-0.5,\y-0.5) circle (5pt);
 }
\end{tikzpicture}
\caption{\label{fig:unimodal_noncrossing} The permutation $\pi=12\,3\,10\,6\,4\,7\,8\,5\,2\,9\,1\,11=(1,12,11)(2,3,10,9)(4,6,7,8,5)$
has unimodal noncrossing cycles and no nested fixed points.}
\end{figure}

Another set counted by the Catalan numbers consists of permutations obtained from diagonal sequences by closing always the outermost open vertical ray and the innermost open horizontal ray, and not allowing \fixed\ except at height $0$.

Yet another variation ensues by closing always the innermost open ray, but not allowing any~\ubounce, and instead allowing \fixed\ at any height. The resulting permutations are precisely {\em noncrossing} permutations in the sense of \cite{corteel_crossings_2007}, 
whereas $321$-avoiding permutations coincide with {\em nonnesting} permutations in this setting. More generally, the distribution of crossings and nestings in permutations can be obtained using the same ideas, as has been done by \cite{corteel_crossings_2007}.

\subsection{Permutations with increasing excedances with respect to cycles}\label{sec:bell}
In this section we show that cyclic permutations whose subsequence of excedances is increasing are counted by the Bell numbers. As discussed in Section~\ref{sec:321}, a permutation has increasing excedances if and only if in the process that builds its cycle diagram from a diagonal sequence, the leftmost open vertical ray is closed each time that a \ubounce\ or a \close\ is encountered.
At this time, this leftmost open vertical ray is connected to one of the open horizontal rays. Thus, when a square of type \close\ is placed in the diagonal and the leftmost open vertical is closed, any of the open horizontal rays can be closed, but there is precisely one of them whose closure creates a new cycle in the permutation.

Letting $\IE\subset\S$ be the subset of permutations with increasing excedances, it follows that
$$\sum_{\pi\in\IE} x^{\fp(\pi)}v^{\exc(\pi)}w^{\dexc(\pi)}t^{\cyc(\pi)}z^{|\pi|}=J_{\d,\l}(z)$$
with $d_h=v(h-1+t)$ for all $h$, $\ell_0=xt$ and $\ell_h=xt+vw+h$ for $h\ge1$, that is, the desired continued fraction is
\begin{equation}\label{eq:Iexc}
J_{\d,\l}(z)=\dfrac{1}{1-xtz-\dfrac{vtz^2}{1-(1+xt+vw)z-\dfrac{v(1+t)z^2}{1-(1+xt+vw)z-\dfrac{v(2+t)z^2}{\ddots}}}}.
\end{equation}
Setting $x=v=w=t=1$, we obtain sequence \cite[A074664]{OEIS}.

Now let $\CIE\subset\C$  be the subset of cyclic permutations with increasing excedances. The same argument gives
$$\sum_{\pi\in\CIE} x^{\fp(\pi)}v^{\exc(\pi)}w^{\dexc(\pi)}t^{\cyc(\pi)}z^{|\pi|}=K_{\d,\l}(z)$$ 
with $d_1=v$, $\ell_0=x$, $d_h=(h-1)v$ for $h\ge2$, and $\ell_h=vw+h$ for all $h\ge1$.
In particular, setting $x=v=w=1$ and letting $\CIE_n=\CIE\cap\S_n$, we have
$\sum_{n\ge1}|\CIE_n|\,z^n=K_{\d,\l}(z)$ with $d_1=1$, $d_h=h-1$ for $h\ge2$, and $\ell_h=h+1$ for all $h$, that is,
\beq\label{eq:FB} \sum_{n\ge1}|\CIE_n|\,z^n=z+\dfrac{z^2}{1-2z-\dfrac{z^2}{1-3z-\dfrac{2z^2}{1-4z-\dfrac{3z^2}{\ddots}}}}.\eeq
Our next result states that the coefficients of this generating function are the Bell numbers. Let $B_n$ denote the $n$th Bell number, which is the number of partitions of an $n$-element set.

\begin{theorem}\label{thm:bell}
For $n\ge1$, $$|\CIE_n|=B_{n-1}.$$
\end{theorem}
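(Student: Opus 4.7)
The plan is to prove the generating function identity
\[
\sum_{n\ge 1}|\CIE_n|\,z^n \;=\; z\,B(z),
\]
where $B(z)=\sum_{n\ge 0}B_n z^n$ denotes the generating function of the Bell numbers. First, I would rewrite the continued fraction in \eqref{eq:FB} as $\sum_{n\ge 1}|\CIE_n|z^n = z + z^2 A(z)$, where $A(z)$ is the J-fraction $J_{\d,\l}(z)$ from \eqref{eq:cf} with $d_h = h$ and $\ell_h = h+2$ (these weights are exactly what one gets after peeling off the initial $U$ and final $D$ of the elevated Motzkin path and shifting heights down by one). By Flajolet's correspondence, $A(z)$ is the generating function of weighted Motzkin paths of length $m$ with weights $U=1$, $D_h = h$, and $L_h = h+2$.

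The heart of the argument is a variant of Flajolet's classical bijection between weighted Motzkin paths and set partitions. Under the standard bijection for the Bell weights $d_h=h$, $\ell_h=h+1$, the $i$th step encodes the role of element $i$: a $U$-step starts a new block, a $D$-step at height $h$ closes one of the $h$ currently open blocks (giving $h$ color choices), an $L$-singleton makes $i$ a singleton (one color), and an $L$-middle places $i$ in one of the $h$ open blocks ($h$ color choices). For the weights of $A(z)$, the extra color at each $L$-step (since $h+2 = 1+h+1$) is interpreted as ``place $i$ in a \emph{virtual} block'' that is present from the start of the process and is never closed by any $D$-step. Processing all $m$ elements produces a set partition of $\{\ast,1,\dots,m\}$ in which $\ast$ lies in the virtual block (possibly together with other elements), and every such partition arises uniquely. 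This yields
\[
A(z) = \sum_{m\ge 0} B_{m+1}\,z^m = \frac{B(z)-1}{z},
\]
so $\sum_{n\ge 1}|\CIE_n|z^n = z + z(B(z)-1) = zB(z) = \sum_{n\ge 1} B_{n-1}\,z^n$, and comparing coefficients of $z^n$ gives $|\CIE_n| = B_{n-1}$.

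The main point to verify is that the modified bijection is weight-preserving: the $h+2$ color choices for an $L$-step at height $h$ split as ``singleton,'' ``middle of one of the $h$ regular open blocks,'' and ``add to the virtual block,'' while the $h$ $D$-colors correctly close one of the $h$ regular open blocks, the virtual block being excluded since it is never closed during the process. The virtual block may grow by absorbing elements through the extra $L$-step color without affecting the height, which matches the fact that the $D$-weights of $A(z)$ omit this block. With this bookkeeping in place, the modified Flajolet bijection establishes $A(z) = (B(z)-1)/z$, completing the proof.
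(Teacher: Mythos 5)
Your proof is correct, but it follows a genuinely different route from the paper's. The paper proves the theorem by an explicit path-level bijection $\bijj$ between the elevated colored Motzkin paths of length $n$ that encode $\CIE_n$ and the Bell-weighted colored Motzkin paths of length $n-1$ of Lemma~\ref{lem:bell} ($d_h=h$, $\ell_h=h+1$): either the whole path $UPD$ is replaced by $LP$ (when no $L$ step has color equal to its height), or the rightmost $L$ step whose color equals its height is surgically converted into a $D$ step and the terminal $D$ is deleted; composing with Flajolet's bijection then yields a fully bijective correspondence $\CIE_n\to\{\text{partitions of }[n-1]\}$ (Fig.~\ref{fig:full}). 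You instead strip the outer $U\cdots D$ envelope of Equation~\eqref{eq:FB} to reduce to the J-fraction $A(z)$ with $d_h=h$, $\ell_h=h+2$ (the weight shift is correct: $D$ steps at height $h\ge 2$ carry weight $h-1$, $L$ steps at height $h\ge 1$ carry weight $h+1$, and lowering heights by one gives $h$ and $h+2$), and you identify $A(z)=\sum_{m\ge 0}B_{m+1}z^m$ by augmenting Flajolet's partition bijection with a ``virtual block'' anchored at an extra element $\ast$. That identification is sound: classifying each $i$ as opener, closer, singleton, transient, or member of the block of $\ast$ is exhaustive and unambiguous, the $h+2$ level-step colors and $h$ down-step colors match exactly, and the small cases check out ($a_1=2=B_2$, $a_2=5=B_3$). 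The conclusion then follows from the algebra $z+z^2A(z)=zB(z)$. What the paper's approach buys is a single self-contained bijection with no detour through a shifted-Bell identity; what yours buys is avoiding the delicate case analysis of the path surgery, at the cost of routing through a generating-function manipulation and a variant of Lemma~\ref{lem:bell} that the paper does not state (though it is standard). Both arguments legitimately take Equation~\eqref{eq:FB} as their starting point, since it is established before the theorem.
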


We will give a bijective proof of Theorem~\ref{thm:bell} using the following result of \cite{flajolet_combinatorial_1980}.

\begin{lemma}[{\cite[Prop.\ 8]{flajolet_combinatorial_1980}}]\label{lem:bell}
There is an explicit bijection between set partitions of $\{1,2,\dots,n\}$ and colored (or weighted) Motzkin paths of length $n$ where $\ell_h=h+1$ and $d_h=h$. In particular, 
$$\sum_{n\ge0}B_n z^n=\dfrac{1}{1-z-\dfrac{z^2}{1-2z-\dfrac{2z^2}{1-3 z-\dfrac{3z^2}{\ddots}}}}.$$
\end{lemma}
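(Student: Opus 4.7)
The plan is a bijective proof using Lemma~\ref{lem:bell}. By the specialization of $\bij$ described in Section~\ref{sec:bell}, the set $\CIE_n$ is in bijection with colored elevated Motzkin paths of length $n$ carrying the weights in Equation~\eqref{eq:FB}, and by Lemma~\ref{lem:bell}, set partitions of $\{1,\dots,n-1\}$ are in bijection with Bell-weighted Motzkin paths of length $n-1$ (weights $\ell_h=h+1$, $d_h=h$). It therefore suffices to construct an explicit length-decreasing bijection between these two families of colored paths.

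The case $n=1$ is trivial. For $n\ge 2$, I would decompose an elevated path as $UPD$, where $P$ is a (possibly empty) Motzkin path at heights $\ge 1$ of length $n-2$, and shift $P$ down by one to obtain a standard path $P'$ of length $n-2$ whose inherited weights are $d_k=k$ (already matching Bell) and $\ell_k=k+2$ on L steps at height~$k$. Writing $k+2=(k+1)+1$, I interpret the extra $+1$ as a binary marker attached to each L step of $P'$. The bijection would then rewrite the marked path $P'$ into a Bell-weighted Motzkin path of length $n-1$ via a local insertion rule: when no L step is marked, prepend an $L$ at height~$0$ to obtain a path of length $n-1$; when some L steps are marked, use the leftmost marker to expand that L step into a $UD$ bump (adding one step) and reabsorb the remaining marker data into the additional Bell-color choices available on surrounding steps.

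The hard part is designing the rewriting so that it is a weight-preserving bijection, rather than merely a counting match. Reabsorbing markers at height $k>0$ must be consistent with the $k+1$ Bell colors on L steps and the $k$ Bell colors on D steps at that height, and this requires a careful global argument. As a fallback, one can prove the equivalent identity $F(z)=zG(z)$ algebraically, where $F$ is the left-hand side of~\eqref{eq:FB} and $G=\sum_{n\ge 0}B_n z^n$: denoting by $f_h$ and $g_h$ the level-$h$ tails of the respective continued fractions, an induction on $h$ using the recurrences $f_h=1-(h+1)z-hz^2/f_{h+1}$ and $g_h=1-(h+1)z-(h+1)z^2/g_{h+1}$ establishes $g_h=hz(f_h+z)/(1-f_h-z)$ for all $h\ge 1$, from which $F=zG$ follows immediately, yielding $|\CIE_n|=B_{n-1}$.
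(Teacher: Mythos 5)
Your proposal does not prove the statement at hand. The statement is Lemma~\ref{lem:bell}: the existence of an explicit bijection between set partitions of $\{1,2,\dots,n\}$ and colored Motzkin paths of length $n$ with weights $\ell_h=h+1$ and $d_h=h$, together with the resulting continued fraction for $\sum_{n\ge0}B_nz^n$. What you have written is a proof sketch of Theorem~\ref{thm:bell} (that $|\CIE_n|=B_{n-1}$), and it opens by invoking Lemma~\ref{lem:bell} as a known ingredient. As an argument for the lemma itself this is circular: at no point do you construct a correspondence between set partitions and colored paths, nor do you derive the displayed continued fraction from such a correspondence. (For reference, the paper does not reprove this lemma either; it cites Flajolet's Proposition~8.)

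A direct proof of the lemma is short, and you should supply it rather than assume it. Scan the elements $1,2,\dots,n$ of a set partition from left to right and record, for each $i$: a $U$ step if $i$ is the smallest element of a block of size at least $2$ (opening a block); a $D$ step if $i$ is the largest element of a block of size at least $2$ (closing one of the currently open blocks, the choice of which gives the $h$ colors of a $D$ step at height $h$); and an $L$ step otherwise, where the $h+1$ colors at height $h$ record whether $i$ is a singleton block (one choice) or is appended as an intermediate element to one of the $h$ currently open blocks ($h$ choices). The height of the path at each point equals the number of open blocks, so the weights are exactly $\ell_h=h+1$ and $d_h=h$, and Flajolet's theorem on J-fractions then yields the displayed continued fraction. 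Your algebraic fallback via the tail recurrences for $f_h$ and $g_h$ is likewise aimed at the identity $F=zG$ of Theorem~\ref{thm:bell}, not at the lemma, so it does not close this gap.
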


\begin{proof}[Proof of Theorem~\ref{thm:bell}]
The restriction of $\bij$ to $\CIE_n$ used to obtain Equation~\eqref{eq:FB} gives a bijection between $\CIE_n$ and the set of elevated colored Motzkin paths of length $n$ where each $L$ step at height $h$ receives some color $r$ with $0\le r\le h$, and each $D$ step at height $h\ge2$ receives some color $r$ with $0\le r\le h-2$ (down steps at height $h=1$ receive color $0$). Let $\mathcal{E}_n$ be this set of colored paths.

By Lemma~\ref{lem:bell}, set partitions of $\{1,2,\dots,n-1\}$ (which are counted by $B_{n-1}$) are in bijection with colored Motzkin paths of length $n-1$ where each $L$ step at height $h$ receives some color $r$ with $0\le r\le h$, and each $D$ step at height $h$ receives some color $r$ with $0\le r\le h-1$. Let $\mathcal{B}_{n-1}$ be this set of colored paths.

Our goal is to construct a bijection $\bijj$ between $\mathcal{E}_n$ and $\mathcal{B}_{n-1}$. Given a  path $M\in\mathcal{E}_n$, consider two cases: 
\begin{enumerate}
\item If no $L$ step of $M$ receives a color equal to its height, then write $M=UPD$, where $P$ is a colored Motzkin path. Let $\bijj(M)=LP$, where the colors of the steps of $P$ are preserved, and the new $L$ step receives color $0$.
\item Otherwise, write $M=P_1L P_2D$, where the step between $P_1$ and $P_2$ is the rightmost $L$ step of $M$ whose color equals its height, say $h$. Let $\bijj(M)=P_1DP_2$, where the new $D$ step receives color $h-1$, and the colors of the steps of $P_1$ and $P_2$ are preserved.
\end{enumerate}

In both cases, we have $\bijj(M)\in\mathcal{B}_{n-1}$ by construction. An example is given in Fig.~\ref{fig:bijj}. 

\begin{figure}[htb]
  \begin{center}
\begin{tikzpicture}[scale=0.5]
 \draw[dotted] (0,-.5)--(0,3.5);
 \draw[dotted] (-.5,0)--(9.5,0);
 \draw[thick] (0,0) circle(1.2pt) \U\U\L\L\U\D\L\D\D;
 \draw (2.5,2) node[above] {$2$};
  \draw[red] (6.5,2) node[below] {$L$};
  \draw[thick,red] (6,2)--(7,2);
 \draw (3.5,2) node[above] {$1$};
 \draw (5.6,2.5) node[above] {$1$};
 \draw (6.5,2) node[above] {$2$};
 \draw (7.6,1.5) node[above] {$0$};
 \draw (8.6,0.5) node[above] {$0$};
   \draw[green] (8.3,0.6) node[below] {$D$};
  \draw[thick,green] (8,1)--(9,0);
 \draw (11,1) node{$\mapsto$};
 \draw (11,1) node[above]{$\bijj$};
 \draw[dotted] (13,-.5)--(13,3.5);
 \draw[dotted] (12.5,0)--(21.5,0);
 \draw[thick] (13,0) circle(1.2pt) \U\U\L\L\U\D\D\D;
  \draw (15.5,2) node[above] {$2$};
 \draw (16.5,2) node[above] {$1$};
 \draw (18.6,2.5) node[above] {$1$};
 \draw (19.6,1.5) node[above] {$1$};
  \draw[red] (19.3,1.6) node[below] {$D$};
  \draw[thick,red] (19,2)--(20,1);
  \draw (20.6,0.5) node[above] {$0$};
\end{tikzpicture}
  \end{center}
  \caption{An example of case 2 of the description of the bijection $\bijj:\mathcal{E}_n\to\mathcal{B}_{n-1}$. The labels on the steps indicate the colors.}\label{fig:bijj}
\end{figure}
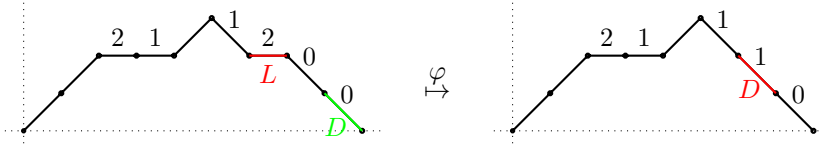

To show that $\bijj$ is a bijection, we describe its inverse. Given a path $M'\in\mathcal{B}_{n-1}$, consider again two cases:
\begin{enumerate}
\item If $M'$ starts with a level step, write $M'=LP$, and let $\bijj^{-1}(M')=UPD$, where the colors of the steps of $P$ are preserved, and the $D$ step at the end gets color $0$.
\item Otherwise, write $M'=P_1D P_2$, where the step between $P_1$ and $P_2$  is the leftmost $D$ step whose color is one less than its height. Call this height $h$, and let $\bijj^{-1}(M')=P_1LP_2D$, where the new $L$ step receives color $h$, the $D$ at the end receives color $0$, and the colors of the steps of $P_1$ and $P_2$ are preserved.\qedhere
\end{enumerate}
\end{proof}

Fig.~\ref{fig:full} gives an example of the full bijection between cyclic permutations with increasing excedances and set partitions. In the encoding of a permutation by a colored Motzkin path via $\bij$, a $D$ step receives color $r$ if the corresponding \close\ in the cycle diagram closes the $r$th open horizontal ray that does not create a cycle with the leftmost vertical ray (which we are forced to close), where available horizontal rays are numbered $0,1,\dots,h-2$ from bottom to top. Similarly, an $L$ step at height $h$ receives color $h$ if the corresponding square in the diagonal sequence is a \ubounce\ (necessarily closing the leftmost open vertical ray), and it receives color $r<h$ 
if the corresponding square is a \lbounce\ closing the $r$th open horizontal ray, numbered increasingly from bottom to top.
The last step in Fig.~\ref{fig:full} illustrates the bijection mentioned in Lemma~\ref{lem:bell}, which is described in~\cite{flajolet_combinatorial_1980}.

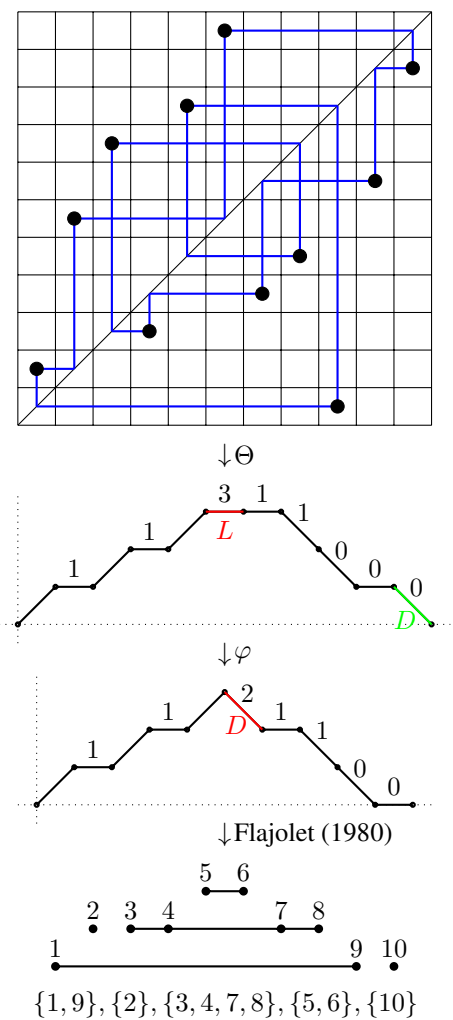
\begin{figure}
\centering
  \begin{tikzpicture}[scale=0.5]
 \draw (0,0) grid (11,11);
 \draw (0,0)--(11,11);
 \newcommand{\pp}{2,6,8,3,9,11,4,5,1,7,10}
    \foreach \y [count=\x] in \pp {
     \draw[blue,thick] (\x-0.5,\x-0.5)--(\x-0.5,\y-0.5)--(\y-0.5,\y-0.5);
     \draw[fill] (\x-0.5,\y-0.5) circle (5pt);
 }
 \draw (5.5,-.8)  node{$\downarrow$};
 \draw (5.5,-.8)  node[right]{$\bij$};

\begin{scope}[shift={(0,-5.3)}]
 \draw[dotted] (0,-.5)--(0,3.5);
 \draw[dotted] (-.5,0)--(11.5,0);
 \draw[thick] (0,0) circle(1.5pt) \U\L\U\L\U\L\L\D\D\L\D;
 \draw (1.5,1) node[above] {$1$};
 \draw (3.5,2) node[above] {$1$};
 \draw (5.5,3) node[above] {$3$};
 \draw (6.5,3) node[above] {$1$};
 \draw (7.6,2.5) node[above] {$1$};
 \draw (8.6,1.5) node[above] {$0$};
 \draw (9.5,1) node[above] {$0$};
 \draw (10.6,0.5) node[above] {$0$};
 \draw[red] (5.5,3) node[below] {$L$};
 \draw[thick,red] (5,3)--(6,3);
 \draw[green] (10.3,0.6) node[below] {$D$};
  \draw[thick,green] (10,1)--(11,0);
  \draw (5.5,-0.8) node{$\downarrow$};
   \draw (5.5,-.8)  node[right]{$\bijj$};
\end{scope}

\begin{scope}[shift={(0.5,-10.1)}]
 \draw[dotted] (0,-.5)--(0,3.5);
 \draw[dotted] (-.5,0)--(10.5,0);
 \draw[thick] (0,0) circle(1.5pt) \U\L\U\L\U\D\L\D\D\L;
 \draw (1.5,1) node[above] {$1$};
 \draw (3.5,2) node[above] {$1$};
 \draw (5.6,2.5) node[above] {$2$};
 \draw (6.5,2) node[above] {$1$};
 \draw (7.6,1.5) node[above] {$1$};
 \draw (8.6,0.5) node[above] {$0$};
 \draw (9.5,0) node[above] {$0$};
 \draw[red] (5.3,2.6) node[below] {$D$};
 \draw[thick,red] (5,3)--(6,2);
 \draw (5,-.8) node{$\downarrow$};
    \draw (5,-.8)  node[right]{\cite{flajolet_combinatorial_1980}};
\end{scope}

\begin{scope}[shift={(0,-14.4)}]
 \draw[thick,fill]  (1,0) circle(2.5pt) node[above] {$1$} --(9,0) circle(2.5pt) node[above] {$9$};
 \draw[thick,fill]  (10,0) circle(2.5pt) node[above] {$10$};
 \draw[thick,fill]  (2,1) circle(2.5pt) node[above] {$2$};
 \draw[thick,fill]  (3,1) circle(2.5pt) node[above] {$3$} --(4,1) circle(2.5pt) node[above] {$4$} -- (7,1) circle(2.5pt) node[above] {$7$} --(8,1) circle(2.5pt) node[above] {$8$};
 \draw[thick,fill]  (5,2) circle(2.5pt) node[above] {$5$} --(6,2) circle(2.5pt) node[above] {$6$};
 \draw (5.5,-1) node{$\{1,9\},\{2\},\{3,4,7,8\},\{5,6\},\{10\}$};
\end{scope}
\end{tikzpicture}
\caption{The bijection between cyclic permutations with increasing excedances and set partitions.}\label{fig:full}
\end{figure}

\ms

An argument similar to the derivation of Equation~\eqref{eq:Iexc} can be used to construct permutations with increasing weak excedances, with the only modification being the disallowance of fixed points at height $h\ge1$. The resulting formula differs from Equation~\eqref{eq:Iexc} only in that now $\ell_h=vw+h$ for $h\ge1$. Setting $x=v=w=t=1$, we obtain precisely the generating function from Lemma~\ref{lem:bell}, implying that the number of permutations in $\S_n$ whose subsequence of weak excedances is increasing is again~$B_n$. 
A direct bijection between such permutations and set partitions can be obtained by declaring $i$ and $\pi(i)$ to be in the same block of the partition for every $i$ with $\pi(i)<i$; equivalently, by erasing from the array of the permutation all the boxes on or above the diagonal, and interpreting the remaining filling of the staircase as a partition, as in~\cite[Fig.~4]{krattenthaler_growth_2006}. For each $k\ge2$, this bijection specializes to a bijection between 
permutations in $\S_n((k{+}1)k\dots1)$ with increasing weak excedances and set partitions of $\{1,2,\dots,n\}$ avoiding {\em $k$-nestings}, as defined in~\cite{burrill_generating_2016}. 

\subsection{Permutations with unimodal cycles and other variations}\label{sec:unimodal}

The conditions in Definition~\ref{def:unimodal} can be considered separately when building cycle diagrams from diagonal sequences. For example, to obtain permutations with unimodal cycles, we simply require that every \close\ completes a cycle by closing two connected open rays. Letting $\UC$ denote the set of permutations with unimodal cycles, we have that 
$\sum_{\pi\in\UC} x^{\fp(\pi)}v^{\exc(\pi)}w^{\dexc(\pi)}t^{\cyc(\pi)}z^{|\pi|}=J_{\d,\l}(z)$
with $d_h=hvt$ and $\ell_h=xt+h(1+vw)$ for all $h$.

It is also possible to get a closed form for the corresponding exponential generating function by using the symbolic method. For $n\ge2$, the generating function for unimodal cycles of size $n$ with respect to the statistics $\fp$, $\exc$ and $\dexc$ is $v(1+vw)^{n-2}$, since each entry other than $1$ and $n$ can be placed in the cycle notation $(1,\dots,n,\dots)$ either after or before $n$, contributing an excedance and a double excedance in the latter case. Thus, the exponential generating function for all unimodal cycles is $$xz+\sum_{n\ge0}v(1+vw)^{n-2}\frac{z^n}{n!}=xz+v\,\frac{e^{(1+vw)z}-1-(1+vw)z}{(1+vw)^2}.$$ Taking sets of such cycles, we get
$$\sum_{\pi\in\UC} x^{\fp(\pi)}v^{\exc(\pi)}w^{\dexc(\pi)}t^{\cyc(\pi)}\frac{z^{|\pi|}}{|\pi|!}=\exp\left(txz+tv\,\frac{e^{(1+vw)z}-1-(1+vw)z}{(1+vw)^2}\right).$$
Setting $x=v=w=t=1$ in the last formula, we obtain sequence~\cite[A187251]{OEIS}.

\ms

Next we consider the set $\UCE$ of permutations with unimodal cycles and increasing excedances. 
Note that there is no immediate way to adapt the symbolic method approach described above to this case. However, our cycle diagram approach is well suited to enumerate these permutations. To obtain the cycle diagram of a permutation in $\UCE$, every \ubounce\ must close the leftmost open vertical ray, and every \close\ must close the leftmost open vertical ray and its matching open horizontal ray, leaving only one possibility in each case. It follows that
$\sum_{\pi\in\UCE} x^{\fp(\pi)}v^{\exc(\pi)}w^{\dexc(\pi)}t^{\cyc(\pi)}z^{|\pi|}=J_{\d,\l}(z),$
where $\ell_0=xt$, $d_h=vt$ and $\ell_h=xt+vw+h$ for all $h\ge1$.

\ms

To obtain permutations with unimodal noncrossing cycles, as in Definition~\ref{def:unimodal}(a)(b), each \ubounce\ must close the rightmost open vertical ray, each \lbounce\ must close the uppermost open horizontal ray, and each \close\ must close the two innermost open rays. It follows that the generating function for these permutations with respect to the usual statistics (including now the number of inversions as well) is
$J_{\d,\l}(z)$ with $\ell_0=xt$, $d_h=vtq^{4h-3}$ and $\ell_h=xtq^{2h}+(1+vw)q^{2h-1}$ for $h\ge1$.

Setting $x=v=w=t=q=1$, we obtain the continued fraction $F(z):=J_{\d,\l}(z)$ with $\ell_0=1$, $d_h=1$ and $\ell_h=3$ for $h\ge1$, which can be written as $F(z)=1/(1-z-B(z))$, where $B(z)=z^2/(1-3z-B(z))$. We can solve this equation to obtain the closed form $$F(z)=\frac{2}{1+z+\sqrt{1-6z+5z^2}}.$$ The coefficients give the sequence~\cite[A033321]{OEIS}, which also counts permutations avoiding certain triples of patterns of length $4$.

The same sequence is obtained when counting permutations with increasing excedances and increasing deficiencies. Indeed, to build the cycle diagrams of such permutations, every \ubounce, \lbounce\ and \close\ is forced to close the outermost open rays. Even though we are unable to keep track of the number of cycles (as happened in the case of $321$-avoiding permutations from Section~\ref{sec:321}), the generating function for permutations with increasing excedances and increasing deficiencies with respect to the other usual statistics is $J_{\d,\l}(z)$ with $\ell_0=x$, $d_h=vq^{2h-1}$ and $\ell_h=x+(1+vw)q^{h}$ for $h\ge1$.

\subsection{Involutions}
Most of our results can be easily adapted to involutions, that is, permutations equal to their inverse.
The cycle diagrams of involutions are those that are symmetric along the main diagonal. In particular, they do not have any diagonal squares of type \ubounce\ or \lbounce, and each \close\ must close an open vertical ray and its symmetric open horizontal ray.

For example, the continued fraction for involutions with respect to our usual statisics (note that involutions have no double excedances) is
$J_{\d,\l}(z)$ with $d_h=vtq^{2h-1}[h]_{q^2}$ and $\ell_h=xtq^{2h}$ for all $h$.
If we require the involutions to be $321$-avoiding, then we get $J_{\d,\l}(z)$ with $\ell_0=xt$, $d_h=vtq^{2h-1}$ and $\ell_h=0$ for $h\ge1$.

\section{Pattern-avoiding cyclic permutations}\label{sec:pattern}

In this section we discuss a related research direction, namely the problem of enumerating pattern-avoiding cyclic permutations, and describe what is known in this area.

\subsection{Classical patterns}

Given a pattern $\sigma$, let $\C_n(\sigma)=\C_n\cap\S_n(\sigma)$ be the set of cyclic permutations that avoid $\sigma$.
Similarly, for a set of patterns $\Sigma$, denote by $\C_n(\Sigma)$ the set of cyclic permutations avoiding all the patterns in~$\Sigma$.

The following question was posed by Richard Stanley at the {\it Permutation Patterns 2010} conference held at Dartmouth College.

\begin{question}\label{q:stanley}
For given $\sigma\in\S_k$, find a formula for $|\C_n(\sigma)|$.
\end{question}

The present paper was originally motivated by this question, which remains open for all patterns of length $|\sigma|\ge3$. Focusing on $\sigma=321$, recall from Lemma~\ref{lem:321} that $321$-avoiding permutations are those whose excedances and non-excedances are increasing. In Theorem~\ref{thm:bell} we have enumerated cyclic permutations that satisfy the first condition, but there is no obvious way to incorporate the second one.

Related to Question~\ref{q:stanley}, one may consider cyclic permutations that avoid multiple patterns. For some very specific sets of patterns, the enumeration of cyclic permutations avoiding them was done in~\cite{archer_cyclic_2014}. In the following theorem, $\mu$ denotes the M\"obius function.

\begin{theorem}[\cite{archer_cyclic_2014}]  
For $n\ge2$,
\begin{align*}
&|\C_n(213,312)|=|\C_n(132,231)|=\frac{1}{2n}\sum_{d|n \atop \text{$d$ odd}} \mu(d) 2^{n/d},\\
&|\C_n(321,2143,3142)|=\frac{1}{n}\sum_{d|n} \mu(d) 2^{n/d},\\
&|\C_n(123,2413,3412)|=\begin{cases} \frac{1}{n}\sum_{d|n} \mu(d) 2^{n/d} & \text{if $n\not\equiv2\bmod4$,} \\ 
\frac{1}{n}\sum_{d|n} \mu(d) 2^{n/d}+\frac{2}{n}\sum_{d|\frac{n}{2}} \mu(d) 2^{n/2d} & \text{if $n\equiv2\bmod 4$.} \end{cases}
\end{align*}
\end{theorem}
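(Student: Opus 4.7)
All three formulas have the unmistakable shape of Lyndon word / aperiodic necklace counts over a binary alphabet: $\frac{1}{n}\sum_{d\mid n}\mu(d)2^{n/d}$ is the classical count of aperiodic binary necklaces of length $n$, while $\frac{1}{2n}\sum_{d\mid n,\,d\text{ odd}}\mu(d)2^{n/d}$ looks like a dihedral variant with an extra involutive symmetry. My plan is therefore to follow a ``binary encoding + M\"obius inversion'' strategy for all three counts.

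First I would establish, for each pattern class, a normal form in terms of binary sequences. For $\{213,312\}$-avoiders this is classical: such permutations are built by repeatedly placing the smallest remaining entry at the left or right end of the partial permutation, giving a bijection with $\{L,R\}^{n-1}$; recording a sign for position $n$ as well produces a cyclic word of length $n$. The $\{132,231\}$ case is symmetric, using ``largest remaining'' instead of ``smallest''. The triples $\{321,2143,3142\}$ and $\{123,2413,3412\}$ need more delicate analysis, but their counts $\sim 2^n/n$ strongly suggest that an analogous two-choices-per-position cyclic recursion exists; I would look for it by analyzing the structure imposed by avoiding a ``forbidden 321-like decreasing skeleton'' together with two $4$-patterns that forbid nonlocal rearrangements.

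Next I would translate the condition ``$\pi$ is an $n$-cycle'' into aperiodicity of the encoded cyclic binary word. Viewing an $n$-cycle as the cyclic word $(1,\pi(1),\pi^2(1),\dots,\pi^{n-1}(1))$, the $n$-cycle condition is exactly that no nontrivial rotation of this word is itself a valid cycle decomposition of the same permutation, which combinatorially becomes primitivity of the binary encoding under cyclic rotation. The standard M\"obius identity then gives $\frac{1}{n}\sum_{d\mid n}\mu(d)2^{n/d}$. For the $\{213,312\}$ case, the factor $\frac{1}{2n}$ and the restriction to odd divisors point to a dihedral action: an extra involution (most likely the $L\leftrightarrow R$ complementation induced by the symmetry $\pi\mapsto \pi^{-1}$ or $\pi\mapsto w_0\pi w_0$) acts freely on aperiodic words except when the word length forces fixed points; the odd-$d$ condition is exactly what survives this free-action reduction.

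The hardest step, and the main obstacle, is the third formula, whose $n\equiv2\pmod 4$ case split is the telltale signature of Burnside's lemma applied to a group containing an involution with a non-trivial fixed-point set. The extra summand $\frac{2}{n}\sum_{d\mid n/2}\mu(d)2^{n/(2d)}$ should count aperiodic necklaces of length $n/2$ fixed by a half-turn, and such fixed points exist only when $n/2$ is odd. Pinning down the precise involution on $\{123,2413,3412\}$-avoiding cycles, checking that it preserves the pattern class, and identifying its fixed-point set would be the bulk of the work; a small-case sanity check (e.g.\ verifying $|\C_4(\{213,312\})|=2$ and $|\C_3(\{213,312\})|=1$ against the formula, and analogous checks for the other two families up to $n=6$) would precede any attempt at the general argument.
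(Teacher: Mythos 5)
First, note that the paper you are working from does not prove this theorem at all: it is quoted verbatim from \cite{archer_cyclic_2014}, so there is no in-paper argument to match, and your proposal has to be measured against the proof in that reference. Against that benchmark, your outline has the right numerology but inverts the logic at the crucial step, and that inversion is where the real gap lies. You propose to start from the pattern class, encode each $\{213,312\}$-avoider by its left/right insertion word in $\{L,R\}^{n-1}$, and then claim that ``$\pi$ is an $n$-cycle'' translates into primitivity of this word under rotation. That translation is asserted, not argued, and it is not true in any obvious sense: the $L/R$ word encodes the one-line notation positionally, whereas cyclicity is a property of the functional graph of $\pi$, and rotating the insertion word does not correspond to any natural operation on the cycle decomposition. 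The appearance of $\frac{1}{n}\sum_{d\mid n}\mu(d)2^{n/d}$ does signal aperiodic binary necklaces, but in the actual proof the necklace is not the insertion encoding: one starts from a primitive periodic binary word $w$, forms the orbit $(w,\sigma(w),\sigma^2(w),\dots)$ of a (signed) shift map, and defines $\pi$ as the pattern of this orbit under the induced order on its points. Cyclicity of $\pi$ is then automatic from the construction, and the work goes into characterizing the image of this orbit-to-pattern map as exactly $\C_n(321,2143,3142)$ (resp.\ the other two classes) and controlling when distinct orbits yield the same pattern; the odd-divisor restriction and the $n\equiv 2\bmod 4$ correction fall out of that analysis for the tent map and the reverse shift, not from a Burnside computation that you would still have to set up from scratch.

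Second, even granting your framework, two of the three formulas are left without the ingredient your plan needs: you explicitly say you do not know the binary normal form for $\{321,2143,3142\}$- or $\{123,2413,3412\}$-avoiding permutations and ``would look for it.'' Since these full pattern classes are not counted by $2^{n-1}$, there is no reason to expect a two-choices-per-position encoding of the non-cyclic class at all; the clean binary structure emerges only after intersecting with $\C_n$, which again forces the orbit-first point of view rather than the class-first one. Your small-case checks are correct, and the dihedral heuristics behind the $\frac{1}{2n}$ and the extra summand point at the right phenomena, but as written the proposal is a plan whose central lemma (cyclic $\Leftrightarrow$ aperiodic encoding) is unproved and whose starting bijection is missing for two thirds of the statement.
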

 
\subsection{Consecutive patterns} 

\cite{gessel_counting_1993} expressed the number of permutations with a given cycle structure and given descent set (equivalently, a given set of positions of occurrences of $\ul{21}$) as a scalar product of symmetric functions.

In~\cite[Thm.\ 6.1]{gessel_counting_1993}, they give a generating function for cyclic permutations according to the number of descents, proved using quasisymmetric functions. 
The distribution of ascent sets (equivalently, occurrences of $\ul{12}$) on $\C_n$ agrees with that of descent sets as long as $n\not\equiv2\bmod 4$, as shown algebraically in~\cite[Thm.\ 4.1]{gessel_counting_1993}, and combinatorially in~\cite[Cor.\ 3.1]{steinhardt_permutations_2010} and~\cite[Prop.\ 3.13]{archer_cyclic_2014}. 
In addition, recursive formulas for the number of cycles in $\C_n$ with $k$ descents (resp. with $k$ ascents), including the case $n\equiv2\bmod 4$, are given and proved combinatorially in~\cite{archer_cyclic_2014}. 
 
Regarding the distribution of occurrences of longer consecutive patterns in cyclic permutations, formulas enumerating $\ul{123}$-avoiding (and $\ul{321}$-avoiding) cycles have been recently obtained in~\cite{elizalde_exact_2017}.

\section{From combinatorial sequences to weight sequences}\label{sec:digression}

In this section we regard the above method for translating between permutations and colored Motzkin paths in a more abstract setting.

Given two sequences $\d=(d_1,d_2,\dots)$ and $\l=(\ell_0,\ell_1,\ell_2,\dots)$, one can define a sequence $\a=(a_0,a_1,a_2,\dots)$ by
\begin{equation}\label{eq:adl}\sum_{n\ge0} a_n z^n= \dfrac{1}{1-\ell_0z-\dfrac{d_1z^2}{1-\ell_1z-\dfrac{d_2z^2}{1-\ell_2z-\dfrac{d_3z^2}{\ddots}}}}.
\end{equation}
As we have used repeatedly in the paper, the coefficient $a_n$ counts weighted Motzkin paths of length $n$ where $D$ steps (resp. $L$ steps) at height $h$ have weight $d_h$ (resp. $\ell_h$) for each $h\ge0$. In particular, if  $d_h,\ell_h\in\Z_{\ge0}$ for all $h$, then  $a_n\in\Z_{\ge0}$ for all $n$. Note also that $a_0=1$, corresponding to the empty path.

It is interesting to consider the inverse construction. Given a sequence $\a=(a_0,a_1,a_2,\dots)$ with $a_0=1$, one can sometimes solve Equation~\eqref{eq:adl} for $\d$ and $\l$. When the solution exists, it is unique. To see this, expand the right-hand side of~\eqref{eq:adl} to get
$$1+\ell_0z+(\ell_0^2+d_1)z^2+(\ell_0^3+2\ell_0d_1+\ell_1d_1)z^3+(\ell_0^4+3l_0^2d_1+2l_0l_1d_1+l_1^2d_1+d_1^2+d_1d_2)z^4+\cdots.$$
In general, equating coefficients of $z^n$ in Equation~\eqref{eq:adl} for even $n$, say $n=2m$, we get
$$a_{2m}=d_1d_2\dots d_m+\text{polynomial}(\ell_0,\ell_1,\dots,\ell_{m-1},d_1,d_2,\dots,d_{m-1}),$$
whereas for odd $n$, say $n=2m+1$, we get
$$a_{2m+1}=d_1d_2\dots d_m\ell_m+\text{polynomial}(\ell_0,\ell_1,\dots,\ell_{m-1},d_1,d_2,\dots,d_{m}),$$
as determined by the heights of $L$ and $D$ steps that Motzkin paths of length $n$ can have.
(The notation $\text{polynomial}(x_1,x_2,\dots)$ stands for some polynomial in the variables $x_1,x_2,\dots$.)
Thus, as long as $d_i\neq0$ for all $i$, one can solve for $\d$ and $\l$ to obtain
\begin{align*} d_m&=\frac{a_{2m}-\text{polynomial}(\ell_0,\ell_1,\dots,\ell_{m-1},d_1,d_2,\dots,d_{m-1})}{d_1d_2\dots d_{m-1}},\\
\ell_m&=\frac{a_{2m+1}-\text{polynomial}(\ell_0,\ell_1,\dots,\ell_{m-1},d_1,d_2,\dots,d_{m})}{d_1d_2\dots d_m}.
\end{align*}
The first few terms of these sequences are
$$\ell_0=a_1, \quad  d_1=a_2-\ell_0^2,\quad \ell_1=\frac{a_3-\ell_0^3-2\ell_0d_1}{d_1}, \quad d_2=\frac{a_4-\ell_0^4-3l_0^2d_1-2l_0l_1d_1-l_1^2d_1-d_1^2}{d_1},\quad\dots$$
In general, the relation between the sequence $\a$ and the sequences $\d$ and $\l$ is given by Stieltjes's expansion theorem for J-fractions~(see \cite{stieltjes_sur_1889}, \cite{wall_analytic_1948} and \cite[Thm.\ S]{flajolet_combinatorial_1980}) in terms of the so-called {\em Stieltjes matrix}.

For an arbitrary sequence $\a$ of nonnegative integers with $a_0=1$, the corresponding sequences $\d$ and $\l$ given by Equation~\eqref{eq:adl} 
may not exist, and even when they do, they typically do not consist of nonnegative integers. However, some experimentation shows that many combinatorial sequences $\a$ seem to correspond to sequences $\d,\l$ of nonnegative integers, as shown in Tab.~\ref{tab:adl}. This means that such sequences can be interpreted as counting weighted Motzkin paths, often with a simple weight function. This raises the questions of which combinatorial sequences have this property, which ones give positive Motzkin weights, and how the nature of the generating function for $\a$ (e.g.\ algebraic or D-finite) is related to the behavior of $\d$ and $\l$. An example of a sequence that does not give nice weights is the one for Baxter numbers~\cite[A001181]{OEIS}, where the corresponding sequences $\d$ and $\l$ contain fractional negative entries. Another example is the counting sequence of $1342$-avoiding permutations, for which the weights are not integers but they are positive and have a simple expression.

\begin{table}[htb]
$$
\begin{array}{|c|c|c||c|c|}
  \hline
  \text{Name of sequence} & \text{OEIS \cite{OEIS}} & a_n & d_h & \ell_h \\
  \hline\hline
  \text{Catalan} & \text{A000108} & C_n & 1 & \begin{cases} 1, & h=0 \\ 2, & h\ge1 \end{cases} \\
  \hline
  \text{Motzkin} & \text{A001006} & M_n & 1 & 1 \\
  \hline
  \text{central binomial} & \text{A000984} & \displaystyle\binom{2n}{n} & \begin{cases} 2, & h=1 \\ 1, & h\ge2 \end{cases} & 2 \\
  \hline
  \text{central trinomial} & \text{A002426} & [z^n]\frac{1}{\sqrt{1-2z-3z^2}} & \begin{cases} 2, & h=1 \\ 1, & h\ge2 \end{cases} & 1 \\
  \hline
  \text{(large) Schr\"oder} & \text{A006318} & S_n & 2 & \begin{cases} 2, & h=0 \\ 3, & h\ge1 \end{cases} \\
  \hline
  \text{Bell} & \text{A000110} & B_n & h & h+1 \\
  \hline
  \text{\bt{c}set partitions \\ with no singletons \et} & \text{A000296} & n!\,[z^n]e^{e^z-1-z} & h & h \\
  \hline
  \text{factorial} & \text{A000142}& n! & h^2 & 2h+1 \\
  \hline
  \text{odd double factorial} & \text{A001147}& 1\cdot3\cdots(2n-1) & 2h(2h-1) & 4h+1 \\
  \hline
  \text{even double factorial} & \text{A000165}& 2\cdot4\cdots(2n) & 4h^2 & 4h+2 \\
  \hline
  \text{derangements} & \text{A000166} & n! \sum_{i=0}^n \frac{(-1)^{i}}{i!} & h^2 & 2h \\
  \hline
  \text{Euler} & \text{A000111} & E_n & \displaystyle\binom{h+1}{2} & h+1 \\
  \hline
  \text{$\ul{123}$-avoiding} & \text{A049774}& |\S_n(\ul{123})| & h^2 & h+1 \\
   \hline
  \text{labeled graphs} & \text{A006125} & 2^{\binom{n}{2}} & 8^{h-1}(2^h-1) & 2^{h-1}(3\cdot2^h-1) \\
  \hline
  \text{unsigned even Genocchi} & \text{A110501} & G_{2n} & h^3(h+1)  & (h+1)(2h+1) \\
  \hline
  \text{median Genocchi} & \text{A005439} & H_{2n+1} & h^4  & 2h(h+1)+1 \\
  \hline
  \end{array}
$$
\caption{Examples of combinatorial sequences and their corresponding Motzkin weights.}
\label{tab:adl}
\end{table}

This correspondence provides an alternative approach to finding (or conjecturing) an expression for a sequence $\a$ for which only the first few terms are known: one can compute the corresponding sequences of weights $\d$ and $\l$, and check if they appear to have a simple formula. As shown in Tab.~\ref{tab:adl}, this method may work even when the generating function for $\a$ is not D-finite.

\acknowledgements
The author thanks Jiang Zeng for providing useful references. 

\bibliographystyle{abbrvnat}
\bibliography{cyc_avoid_bib}

\end{document}